\documentclass[12pt,a4paper]{amsart}
\usepackage{amsaddr}
\usepackage{amsfonts}
\usepackage{amsthm}
\usepackage{amsmath}
\usepackage{xcolor}
\usepackage{amssymb}
\usepackage{mathtools} \usepackage{amscd}
\usepackage{dsfont} \usepackage[latin2]{inputenc}
\usepackage{t1enc}
\usepackage[mathscr]{eucal}
\usepackage{indentfirst}
\usepackage{graphicx}
\usepackage{graphics}
\usepackage{hyperref}
\usepackage{epic}
\numberwithin{equation}{section}
\usepackage[margin=2.9cm, marginparwidth=2.4cm]{geometry}
\usepackage{epstopdf} 
\usepackage{enumerate}
\usepackage[sort,numbers]{natbib}

\theoremstyle{plain}
\newtheorem{Th}{Theorem}[section]
\newtheorem{Lemma}[Th]{Lemma}
\newtheorem{Cor}[Th]{Corollary}
\newtheorem{Prop}[Th]{Proposition}

 \theoremstyle{definition}

\newtheorem*{Rem*}{Remark}
\newtheorem*{Def*}{Definition}
\newtheorem{?}[Th]{Problem}

\newcommand{\E}[1]{\mathbb{E} #1 }

\newcommand{\Ind}[1]{\mathds{1}_{\{ #1\}}}

\newcommand{\R}{\mathbb{R}}
\newcommand{\N}{\mathbb{N}}
\newcommand{\KLD}{\mathrm{D}_{\mathrm{KL}}}
\newcommand{\econst}{\mathrm{e}}
\newcommand{\dd}{\,\mathrm{d}}
\newcommand{\cd}{\operatorname{CD}}

\usepackage{ulem}
\usepackage{cancel}

\newcommand{\half}{\frac{1}{2}}
\newcommand{\thalf}{\tfrac{1}{2}}

\newcommand{\fg}{\mathfrak{g}}
\newcommand{\cN}{\mathcal{N}}

\begin{document}

\title{Asymptotics of Smoothed Wasserstein Distances}

\author{Hong-Bin Chen \and Jonathan Niles-Weed}
\thanks{JNW gratefully acknowledges the support of the Institute for Advanced Study, where a portion of this research was conducted.}
\address{Courant Institute of Mathematical Sciences, New York University}
\email{hbchen@cims.nyu.edu}
\email{jnw@cims.nyu.edu}

\date{\today}
\begin{abstract}
We investigate contraction of the Wasserstein distances on $\R^d$ under Gaussian smoothing.
It is well known that the heat semigroup is exponentially contractive with respect to the Wasserstein distances on manifolds of positive curvature; however, on flat Euclidean space---where the heat semigroup corresponds to smoothing the measures by Gaussian convolution---the situation is more subtle.
We prove precise asymptotics for the $2$-Wasserstein distance under the action of the Euclidean heat semigroup, and show that, in contrast to the positively curved case, the contraction rate is always polynomial, with exponent depending on the moment sequences of the measures.
We establish similar results for the $p$-Wasserstein distances for $p \neq 2$ as well as the $\chi^2$ divergence, relative entropy, and total variation distance.
Together, these results establish the central role of moment matching arguments in the analysis of measures smoothed by Gaussian convolution.
\end{abstract}

\maketitle

\section{Introduction}
Given two probability distributions $\mu$ and $\nu$ on a Riemannian manifold $M$, what can be said about the Wasserstein distance $W_2^2(\mu P_t, \nu P_t)$, where $P_t$ is the heat semigroup?
The seminal works of \citet{OttVil00} and~\citet{Sturm_Renesse} show that this question is intimately related to the geometry of $M$---in particular, its curvature.
Specifically, \citet{Sturm_Renesse} show that the Ricci curvature of $M$ is bounded below by $K$ if and only if
\begin{equation}\label{curvature_contraction}
W_2(\mu P_t, \nu P_t) \leq \econst^{-Kt} W_2(\mu, \nu)
\end{equation}
for all probability measures $\mu$ and $\nu$ on $M$ and $t \geq 0$. 
In particular, when $M$ is positively curved, the convergence of $W_2(\mu P_t, \nu P_t)$ to zero is exponentially fast.

If we specialize to the flat space $M = \R^d$, then the application of the heat semigroup is nothing more than convolution by the Gaussian measure $\rho_t$ with density
\begin{equation}\label{eqn:rho_def}
\rho_t(x) = \frac{1}{(2\pi t)^\frac{d}{2}}e^{-\frac{1}{2t}|x|^2}, \quad x\in \R^d.
\end{equation}
It is immediate that $W_2(\mu * \rho_t, \nu * \rho_t) \leq W_2(\mu, \nu)$, but since $\R^d$ has zero curvature, \eqref{curvature_contraction} does not imply any strict contraction as $t \to \infty$.
And, indeed, there may be none: if $\mu = \delta_x$ and $\nu = \delta_y$ for $x, y \in \R^d$, then
\begin{equation*}
W_2(\mu * \rho_t, \nu * \rho_t) = W_2(\mu, \nu) = |x - y| \quad \forall t \geq 0\,.
\end{equation*}
If $x \neq y$, then we do not even have $W_2(\mu * \rho_t, \nu * \rho_t) \to 0$.
More generally, it is straightforward to see that if $\mu$ and $\nu$ have different means, then $W_2(\mu * \rho_t, \nu * \rho_t)$ is bounded away from $0$ as $t \to \infty$.

The fact that~\eqref{curvature_contraction} is uninformative on $\R^d$ is well known and has spurred an interest in refinements for finite-dimensional flat spaces.
\Citet{Bolley2014} performed a careful analysis of the heat semigroup on $\R^d$ and established an elegant improvement of~\eqref{curvature_contraction}:
\begin{equation}\label{bolley_bound}
W_2^2(\mu * \rho_t, \nu * \rho_t) \leq W_2^2(\mu, \nu) - \frac{2}{d} \int_0^t (h(\mu * \rho_s) - h(\nu * \rho_s))^2 \dd s\,,
\end{equation}
where $h$ is the differential entropy (i.e., the relative entropy with respect to the Lebesgue measure).
Unlike~\eqref{curvature_contraction}, this result can yield strict contraction even in the absence of curvature.
However, \eqref{bolley_bound} does not make it easy to answer questions of the following type:
\begin{enumerate}
\item Under what conditions on $\mu$ and $\nu$ does $W_2(\mu * \rho_t, \nu * \rho_t) \to 0$?
\item If $W_2(\mu * \rho_t, \nu * \rho_t) \to 0$, at what rate does this contraction occur?
\end{enumerate}

In this work, we give sharp answers to both questions.
A consequence of our main theorem is that, under suitable tail bounds, the quantity $W_2(\mu * \rho_t, \nu * \rho_t)$ always approaches zero as $t \to \infty$ if $\mu$ and $\nu$ have the same mean, but that this convergence always happens at a polynomial rather than exponential rate.
Indeed, if the first $n$ moments of $\mu$ and $\nu$ match but their $(n+1)$th moments do not, then $W_2^2(\mu * \rho_t, \nu * \rho_t) = \Theta(t^{-n})$.
Moreover, we show that the rescaled quantity $t^n W_2^2(\mu * \rho_t, \nu * \rho_t)$ has a positive limit as $t \to \infty$:
\begin{equation*}
\lim_{t \to \infty} t^n W_2^2(\mu * \rho_t, \nu * \rho_t) = c_{\mu, \nu} > 0\,,
\end{equation*}
where $c_{\mu, \nu}$ is an explicit positive constant depending on the $(n+1)$st moments of $\mu$ and $\nu$.
We complement these results by showing that, up to a trivial rescaling, $c_{\mu, \nu}$ is also the limiting value of the relative entropy and $\chi^2$ divergence between $\mu * \rho_t$ and $\nu * \rho_t$.
We establish similar results for the total variation distance, which decays at the same rate but possesses a different limiting value.
Together, these results imply that a variety of measures of discrepancy between probability distributions on $\R^d$ agree in the limit under application of Gaussian smoothing.

Our results also extend to the $p$-Wasserstein distances for $p \neq 2$.
For example, if the first $n$ moments of $\mu$ and $\nu$ match but the $(n+1)$th moments do not and the distributions satisfy sufficiently strong tail conditions, we obtain bounds of the form
\begin{equation*}
0 <  \liminf_{t \to \infty} t^{n/2} W_p(\mu * \rho_t, \nu * \rho_t)\leq \limsup_{t \to \infty} t^{n/2} W_p(\mu * \rho_t, \nu * \rho_t)  < \infty\,.
\end{equation*}

Together with the sharp asymptotics we obtain for $p = 2$, these results show that all the Wasserstein distances $W_p$ for $p \in [1, \infty)$ decay at the same, polynomial rate, under a simple moment matching condition.

\subsection{Related work}
The Wasserstein contraction of diffusion semigroups is deeply connected to several modern areas of geometry and probability theory.
\Citet{OttVil00} first established a link between contraction of the heat semigroup, Talagrand's inequality, and log-Sobolev inequalities.
The connection between these ideas is the formal understanding due to \citet{Otto2001} of the heat semigroup as a gradient flow associated with the entropy functional on the space of probability measures equipped with the Wasserstein metric~\citep{Ambrosio2005}.
This powerful analogy reveals the central role of the convexity of the entropy functional along geodesics in this space and forms the basis for the synthetic notions of Ricci curvature~\citep{Lott2009,Sturm2006,Sturm2006a,Villani2016}.
This perspective also sheds new light on the concentration of measure phenomenon~\citep{Ledoux2001}, via the transportation-entropy inequalities developed by~\citet{Mar96,Mar96a} and~\citet{Tal96}.
More generally, these ideas parallel the development of a general set of techniques for studying Riemannian manifolds via diffusion processes~\citep{Wang2014}.

The condition in~\eqref{curvature_contraction} that the Ricci curvature be bounded below by $K$ is known as the $\cd(K,\infty)$ condition.
The general $\cd(K, N)$ (``curvature-dimension'') condition expresses in a certain sense that the Ricci curvature is bounded below by $K$ and the dimension is at most $N$~\citep{Bakry2014}.
The result of \citet{Bolley2014} given in~\eqref{bolley_bound} is the correct analogue of~\eqref{curvature_contraction} for spaces satisfying the $\cd(0, d)$ condition.
\Citet{Bolley2014} develop similar contractive results involving a different measure of distance for spaces satisfying $\cd(K, N)$ for general $K$ and $N$.
Establishing the correct $\cd(K, N)$ analogues for statements first formulated under a $\cd(K, \infty)$ condition is an area of active research~\citep[see][and references therein]{Bolley2018}.
This line of work is closely related to the problem of proving similar contractive estimates for general diffusion processes \citep{Zhang2018,Luo2014,Wang2016,Eberle2016,Eberle2011}.

To obtain asymptotics for the $2$-Wasserstein distance, we employ a technique similar to one recently used to establish sharp limiting constants for the $2$-dimensional matching problem~\citep{AmbStrTre19}.
We solve a linearized form of the Monge-Amp\`ere equation to obtain a candidate feasible transport solution in the form of a coupling investigated by~\citet{Mos65}.
Evaluating the cost of this \textit{ansatz}---and establishing a matching lower bound via a strategy developed by \citet{Pey18}---shows that this coupling is asymptotically optimal.

The behavior of smoothed version of the Wasserstein distance is also of statistical interest.
Several recent works in statistics and information theory examine the behavior of $W_2^2(\mu * \rho_t, \nu * \rho_t)$ when $\nu = \mu_n$ is an empirical measure comprising $n$ i.i.d.~samples from $\mu$.
\Citet{Weed2018} noticed that when $\mu$ and $\nu$ are compactly supported, the $1$-Wasserstein distance satisfies $\E W_1(\mu * \rho_t, \mu_n * \rho_t) \ll \E W_1(\mu, \mu_n)$ when $t$ is sufficiently large.
This observation implies that certain statistical tasks involving the Wasserstein distance become easier if samples are first smoothed by Gaussian noise.
\Citet{Goldfeld2020} extended this analysis to the total variation distance, relative entropy, and $2$-Wasserstein distance, as well as to distributions with unbounded support.
Motivated by these findings, \citet{Goldfeld2020b} propose to study this smoothed Wasserstein distance as a statistically attractive variant of the standard Wasserstein distances.

Our asymptotic results on the behavior of the $\chi^2$-divergence and relative entropy agree with several nonasymptotic bounds in the statistics literature for Gaussian mixtures~\citep{Bandeira2020,Wu2018}.
To our knowledge, the asymptotic connection with smoothed Wasserstein distances is new.

\subsection{Notation}
We denote by $\N$ the set of nonnegative integers. Sometimes, we shall write $\N\cup\{0\}$ to emphasize that $0$ is admissible.
The symbol $\fg$ will denote the standard Gaussian measure on $\R^d$, namely, $\fg(dx) = \rho_1(x)dx$, where $\rho_1$ is defined as in~\eqref{eqn:rho_def}. For any random variable $Z=(Z_i)_{1\leq i\leq m}$ in $\R^m$ for some $m\in\N\setminus\{0\}$, we denote by $\E Z = (\E Z_i)_{1\leq i\leq m}$ the vector-valued expectation of $Z$.

We recall the following multi-index notation.
For $y \in \R^d$ and $\alpha\in \N^d$, we write~$y^\alpha = y^{\alpha_1}_1y^{\alpha_2}_2\dots y^{\alpha_d}_d$ and $\alpha! = \prod_{i=1}^d (\alpha_i!)$. For $j\in\N$, let  $[j]=\{\alpha \in \N^d: \sum_{i=1}^d\alpha_i = j\}$.

The symbol $c > 0$ denotes a universal constant whose value may change from line to line.
We use subscripts to indicate when such a constant depends on other parameters of the problem.
For real numbers $a$ and $b$, we write $a \vee b$ for $\max\{a, b\}$.

\section{Setting and main results}
Consider two probability measures $\mu$ and $\nu$ on $\R^d$. Let $X$ and $Y$ be random variables with laws $\mu$ and $\nu$, respectively. 
Throughout, we consider measures with sufficiently light tails, which we quantify via the following condition:
\begin{itemize}
    \item Condition $\mathbf{E}(\beta)$, for $\beta>0$, is said to hold if $\E e^{\beta|X-\E X|^2},\ \E e^{\beta|Y-\E Y|^2}<\infty$.
\end{itemize}

The following moment-matching condition plays a central role in our results.
\begin{itemize}
    \item Condition $\mathbf{M}(n)$, for $n\in\N\cup\{0\}$, is said to hold if $n$ is the largest nonnegative integer such that
    \begin{align*}
    \E X^\alpha =\E Y^\alpha,\quad \text{for all } \alpha \in [k] \text{ and all }k\leq n.
\end{align*}
\end{itemize}
In other words, $\mathbf{M}(n)$ holds if  moment tensors of $X$ up to order $n$ match those of $Y$, but those of order $n+1$ do not.

\subsection{Exact asymptotics for the \texorpdfstring{$2$}{2}-Wasserstein distance}\label{sec:exact}
Our first main result gives exact asymptotics for the $2$-Wasserstein distance under the tail condition $\mathbf{E}(\beta)$ and the moment matching condition $\mathbf{M}(n)$.

\begin{Th}\label{Theorem:exact_p=2}
Suppose $\mathbf{E}(\beta)$ holds for some $\beta>0$. If $\mathbf{M}(n)$ holds for some $n\in \mathbb{N}\cup\{0\}$, then
\begin{align*}
    \lim_{t\to\infty} t^{n} W^2_2(\mu*\rho_t, \nu*\rho_t) = \frac{1}{n+1}\sum_{\alpha \in [n+1]}\frac{1}{\alpha!} \Big|\E X^\alpha - \E Y^\alpha \Big|^2.
\end{align*}
\end{Th}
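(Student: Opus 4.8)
\emph{Overview and rescaling.} The plan is to reduce, by rescaling, to a perturbative regime in which $\mu*\rho_t$ and $\nu*\rho_t$ become small perturbations of a common Gaussian, so that the transport problem linearizes; the ingredients are a Hermite expansion of the smoothed densities, a Moser-type coupling for the upper bound, and Peyre's duality argument for the lower bound. Dilating $\R^d$ by $t^{-1/2}$ turns convolution with $\rho_t$ into convolution with $\fg$ and multiplies $W_2^2$ by $t^{-1}$, so that
\begin{equation*}
t^n\,W_2^2(\mu*\rho_t,\nu*\rho_t)=t^{n+1}\,W_2^2(\mu_t*\fg,\nu_t*\fg),
\end{equation*}
where $\mu_t,\nu_t$ denote the laws of $t^{-1/2}X$ and $t^{-1/2}Y$. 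These concentrate at the origin, so $\mu_t*\fg$ and $\nu_t*\fg$ converge to $\fg$---indeed $W_2^2(\mu_t*\fg,\fg)\leq t^{-1}\E{|X|^2}\to0$---and it suffices to show $W_2^2(\mu_t*\fg,\nu_t*\fg)=t^{-(n+1)}(C+o(1))$ with $C$ the asserted constant.

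\emph{Hermite expansion.} Let $\varphi$ be the density of $\fg$, so that $\partial^\alpha\varphi=(-1)^{|\alpha|}H_\alpha\varphi$ for the multivariate Hermite polynomials $H_\alpha$. Writing $p_t(x)=\E{\varphi(x-t^{-1/2}X)}$ for the density of $\mu_t*\fg$ and Taylor-expanding about $x$, the hypothesis $\mathbf{E}(\beta)$---which gives finite moments of all orders---controls the remainder and yields
\begin{equation*}
\frac{p_t}{\varphi}=1+\sum_{1\leq|\alpha|\leq n+1}\frac{\E X^\alpha}{\alpha!\,t^{|\alpha|/2}}\,H_\alpha+o\bigl(t^{-(n+1)/2}\bigr)\qquad\text{in }L^2(\fg),
\end{equation*}
and likewise for $q_t/\varphi$. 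By $\mathbf{M}(n)$ the terms with $1\leq|\alpha|\leq n$ cancel in the difference, so $(q_t-p_t)/\varphi=t^{-(n+1)/2}(g+o(1))$ in $L^2(\fg)$, where $g:=\sum_{\alpha\in[n+1]}\frac{\E Y^\alpha-\E X^\alpha}{\alpha!}H_\alpha$ is nonzero by the maximality in $\mathbf{M}(n)$. Two structural facts are crucial: $g$, being a combination of degree-$(n+1)$ Hermite polynomials, is an eigenfunction of the Ornstein--Uhlenbeck generator $L=\Delta-x\cdot\nabla$ with eigenvalue $-(n+1)$; and $\int g^2\dd\fg=\sum_{\alpha\in[n+1]}|\E X^\alpha-\E Y^\alpha|^2/\alpha!$ by Hermite orthogonality, $\int H_\alpha H_\beta\dd\fg=\alpha!\,\delta_{\alpha\beta}$.

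\emph{Matching bounds.} For the upper bound I would use a Moser-type coupling \citep{Mos65}: interpolate the densities linearly, $\dd\sigma_s=r_s\dd x$ with $r_s=(1-s)p_t+sq_t$, and transport along the velocity field $\nabla\phi_s$ fixed by $\nabla\cdot(r_s\nabla\phi_s)=p_t-q_t$, so that the Benamou--Brenier formula bounds $W_2^2(\mu_t*\fg,\nu_t*\fg)$ by $\int_0^1\int|\nabla\phi_s|^2\dd\sigma_s\dd s$. Since $r_s\to\varphi$, to leading order $L\phi_s=(p_t-q_t)/\varphi=-t^{-(n+1)/2}(g+o(1))$, which is solved explicitly via the eigenvalue relation, $\phi_s\approx\tfrac{t^{-(n+1)/2}}{n+1}g$; with the Gaussian integration by parts $\int|\nabla f|^2\dd\fg=-\int fLf\dd\fg$ this yields $\limsup_t t^{n+1}W_2^2\leq\tfrac1{n+1}\int g^2\dd\fg=C$. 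For the matching lower bound I would run Peyre's argument \citep{Pey18}: Cauchy--Schwarz along displacement geodesics gives, for every $f\in C^1$, $W_2(\sigma,\tau)\geq\int f\,\dd(\tau-\sigma)\cdot\bigl(\int_0^1\int|\nabla f|^2\dd\sigma_s\dd s\bigr)^{-1/2}$; taking $f=g$, the first factor equals $t^{-(n+1)/2}\bigl(\int g^2\dd\fg+o(1)\bigr)$ by the expansion, while the second tends to $\bigl((n+1)\int g^2\dd\fg\bigr)^{-1/2}$ since $|\nabla g|^2$ is a polynomial and the interpolants $\sigma_s$ have uniformly Gaussian tails and converge weakly to $\fg$. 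Hence $\liminf_t t^{n+1}W_2^2\geq C$, which together with the upper bound proves the theorem.

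\emph{Main obstacle.} The algebraic heart---that the leading correction is a degree-$(n+1)$ Hermite combination, hence an $L$-eigenfunction whose $L^2(\fg)$ norm is computed by orthogonality, so that the upper and lower estimates produce the \emph{same} constant $C$---is automatic once the expansion is in hand. The genuine work is analytic: making the Gaussian-weighted Taylor expansion rigorous with the remainder controlled in $L^2(\fg)$, and showing that the density ratios $p_t/\varphi$ and $q_t/\varphi$ are comparable to $1$ outside a region of negligible transport cost, so that the Moser construction is admissible and $\int|\nabla g|^2\dd\sigma_s\to\int|\nabla g|^2\dd\fg$. This is precisely where $\mathbf{E}(\beta)$ is needed---to bound remainder terms of the form $\mathbb{E}\bigl[|H_\alpha(x-\theta t^{-1/2}X)|\,e^{|x|\,|X|/\sqrt{t}}\,|X|^{n+2}\bigr]$ and to transfer polynomial integrability from $\fg$ to the interpolating measures---while the far tails, where $p_t/\varphi$ is unbounded, must be dispatched by a separate truncation estimate showing their contribution to $W_2^2$ is $o(t^{-(n+1)})$.
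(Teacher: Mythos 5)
Your proposal is correct and follows essentially the same route as the paper: rescale to the Gaussian, extract the leading Hermite chaos (in the paper, the polynomial $Q$ of Lemma~\ref{Lemma:Theta_formula_for_matching_moments}, which is your $g$), exploit that it is an eigenfunction of the Ornstein--Uhlenbeck operator with eigenvalue $-(n+1)$, prove the upper bound via a Moser/Benamou--Brenier coupling, and prove the lower bound via Peyre's duality along a displacement geodesic, with displacement convexity controlling the interpolants' densities against a slightly fatter Gaussian $\rho_{at}$. The only cosmetic difference is that you propose $g$ itself as the test function in Peyre's inequality where the paper uses the PDE solution $u$ (equivalently $w$), and the tail issue you flag is handled in the paper not by truncation but by comparing $\mu*\rho_t$ to $\rho_{at}$, $a=1+t^{-1/2}$, which makes the density ratio uniformly bounded.
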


If $n = 0$, then Theorem~\ref{Theorem:exact_p=2} reads
\begin{equation*}
    \lim_{t\to\infty} W^2_2(\mu*\rho_t, \nu*\rho_t) = |\E X - \E Y|^2\,,
\end{equation*}
which recovers exactly the situation identified in the introduction.
Moreover, as Corollary~\ref{corollary:1} below makes clear, this zeroth-order behavior is common to all Wasserstein distances.

We obtain the upper bound in Theorem~\ref{Theorem:exact_p=2} by constructing a coupling between~$\mu*\rho_t$ and $\nu*\rho_t$ via the solution to a PDE obtained by linearizing the Monge-Amp\`ere equation.
The proof of this upper bound appears in Section~\ref{moser}.
To show the lower bound, we employ the concept of displacement interpolation, due to McCann~\cite{McC97}, and control the solution of the PDE considered in Section~\ref{moser} along a geodesic in Wasserstein space.
The proof appears in Section~\ref{section:exact_asym_for_p=2}.

Theorem~\ref{Theorem:exact_p=2} implies several useful estimates, including the following corollary, showing that a good approximation of~$W_2(\mu*\rho_t, \nu*\rho_t)$ can be obtained by replacing $\mu$ and $\nu$ with appropriate Gaussian measures.
\begin{Cor}\label{corollary:2} 
Suppose $\mathbf{E}(\beta)$ holds from some $\beta>0$. Let $\mathcal{N}_\mu$ (respectively, $\mathcal{N}_\nu$) be Gaussian with the same mean and covariance as $\mu$ (respectively, $\nu$). Then we have
\begin{align*}
    \Big|W_2(\mu*\rho_t, \nu*\rho_t)-W_2(\mathcal{N}_\mu*\rho_t,\mathcal{N}_\nu*\rho_t)\Big|=\mathcal{O}(t^{-1}).
\end{align*}
\end{Cor}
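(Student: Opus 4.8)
The plan is to deduce this from Theorem~\ref{Theorem:exact_p=2} together with the triangle inequality for $W_2$. Writing
\[
\bigl|W_2(\mu*\rho_t, \nu*\rho_t)-W_2(\mathcal{N}_\mu*\rho_t,\mathcal{N}_\nu*\rho_t)\bigr|
\le W_2(\mu*\rho_t,\mathcal{N}_\mu*\rho_t)+W_2(\nu*\rho_t,\mathcal{N}_\nu*\rho_t)\,,
\]
it suffices to show that each term on the right-hand side is $\mathcal{O}(t^{-1})$; by symmetry we treat only the pair $(\mu,\mathcal{N}_\mu)$.

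First I would check that $(\mu,\mathcal{N}_\mu)$ satisfies the hypotheses of Theorem~\ref{Theorem:exact_p=2}. Since $\mathbf{E}(\beta)$ implies that $\mu$ has finite moments of all orders, the Gaussian $\mathcal{N}_\mu$ is well-defined; being Gaussian, it satisfies $\mathbf{E}(\beta')$ for every $\beta'$ below the reciprocal of twice the largest eigenvalue of its covariance, so the pair $(\mu,\mathcal{N}_\mu)$ jointly satisfies $\mathbf{E}(\beta'')$ with $\beta''=\min\{\beta,\beta'\}$. By construction $\mu$ and $\mathcal{N}_\mu$ have the same mean and covariance, hence identical moment tensors of all orders $k\le 2$. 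Consequently, either all moments of $\mu$ and $\mathcal{N}_\mu$ agree---in which case the determinacy of the moment problem under $\mathbf{E}(\beta)$ forces $\mu=\mathcal{N}_\mu$, so that $W_2(\mu*\rho_t,\mathcal{N}_\mu*\rho_t)=0$ and there is nothing to prove---or the moment-matching condition $\mathbf{M}(n)$ holds for some finite $n$, necessarily with $n\ge 2$.

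In the latter case, Theorem~\ref{Theorem:exact_p=2} gives $t^{n}W_2^2(\mu*\rho_t,\mathcal{N}_\mu*\rho_t)\to L$ for a finite constant $L$, so that $W_2(\mu*\rho_t,\mathcal{N}_\mu*\rho_t)=\mathcal{O}(t^{-n/2})=\mathcal{O}(t^{-1})$ because $n\ge 2$. Applying the same reasoning to $(\nu,\mathcal{N}_\nu)$ and inserting both bounds into the triangle inequality above completes the argument.

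There is no real difficulty here: the only points deserving attention are verifying that the Gaussian surrogates meet the tail condition $\mathbf{E}(\cdot)$ so that Theorem~\ref{Theorem:exact_p=2} genuinely applies to the auxiliary pairs, and observing that matching the first two moments forces the exponent $n$ in $\mathbf{M}(n)$ to be at least $2$---it is precisely this, via the rate $t^{-n/2}$, that yields the claimed $\mathcal{O}(t^{-1})$. One could also note that the same computation shows the discrepancy is in fact $\mathcal{O}(t^{-3/2})$ whenever both $\mu$ and $\nu$ have vanishing centered third moments, e.g.\ when they are symmetric.
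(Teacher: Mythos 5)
Your proof is correct and follows the same route the paper takes: triangle inequality followed by applying Theorem~\ref{Theorem:exact_p=2} to the pairs $(\mu,\mathcal{N}_\mu)$ and $(\nu,\mathcal{N}_\nu)$, whose first and second moments match by construction so that $\mathbf{M}(n)$ holds with $n\ge 2$. You are somewhat more careful than the paper in verifying that the Gaussian surrogates satisfy the tail condition $\mathbf{E}$ and in handling the degenerate case where all moments agree, but the argument is essentially identical.
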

Note that the quantity $W_2(\mathcal{N}_\mu*\rho_t,\mathcal{N}_\nu*\rho_t)$ has an explicit expression (see, e.g., \cite[Proposition 7]{givens_Shortt_Gaussian_W_2}).
Since the first two moments of $\mu$ and $\cN_\mu$ (respectively, $\nu$ and $\cN_\nu$) match, Corollary~\ref{corollary:2} follows immediately from Theorem~\ref{Theorem:exact_p=2} after applying the triangle inequality:
\begin{equation*}
\Big|W_2(\mu*\rho_t, \nu*\rho_t)-W_2(\mathcal{N}_\mu*\rho_t,\mathcal{N}_\nu*\rho_t)\Big| \leq W_2(\mu*\rho_t, \mathcal{N}_\mu*\rho_t) + W_2(\nu * \rho_t, \mathcal{N}_\nu*\rho_t) = \mathcal{O}(t^{-1})\,.
\end{equation*}

\subsection{Generalization to \texorpdfstring{$W_p$}{Wp} for \texorpdfstring{$p \neq 2$}{p neq 2}}
Though we do not develop exact asymptotics when $p \neq 2$, our main result of this section shows that the other Wasserstein distances exhibit the same qualitative behavior as $W_2$, namely, that under $\mathbf{M}(n)$ the quantity $W_p(\mu*\rho_t, \nu*\rho_t)$ decays at the rate $t^{-n/2}$.

\begin{Th}\label{Theorem:upper_bound}
Let $n\in\N\setminus\{0\}$ and $p \geq 1$.
If $\mathbf{E}(\beta)$ holds for some $\beta>0$, and $\mathbf{M}(n)$ holds, then there are positive constants $c_{\mu, \nu}$ and $c_{d,n,p,\beta}$ and functions $\underline{h}(t)$ and $\overline{h}(t)$ satisfying $\lim_{t \to \infty} \underline{h}(t) = \lim_{t \to \infty} \overline{h}(t) = 1$ such that
\begin{equation*}
c_{\mu, \nu} \underline{h}(t) \leq t^{\frac n 2} W_p(\mu*\rho_t, \nu*\rho_t) \leq c_{d,n,p,\beta} \overline{h}(t), \qquad \forall t > \frac {p-1} \beta\,.
\end{equation*}
\end{Th}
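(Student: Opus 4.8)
The plan is to handle the lower bound by a soft argument resting on Theorem~\ref{Theorem:exact_p=2} and Kantorovich duality, and the upper bound by a Benamou--Brenier/Moser estimate along a linearized Monge--Amp\`ere flow, now measured in $L^p$ rather than $L^2$. Throughout one may assume $\mathbb{E}[X]=\mathbb{E}[Y]=0$: translating both measures by the same vector leaves every $W_p(\mu*\rho_t,\nu*\rho_t)$ unchanged, and $n\ge 1$ forces the means to coincide; after this reduction $\mathbf{E}(\beta)$ directly controls the moments $\mathbb{E}[X^\alpha],\mathbb{E}[Y^\alpha]$.

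\emph{Lower bound.} For $p\ge 2$ I would simply use $W_p\ge W_2$ and invoke Theorem~\ref{Theorem:exact_p=2}: under $\mathbf{M}(n)$ at least one difference $\mathbb{E}[X^\alpha]-\mathbb{E}[Y^\alpha]$, $\alpha\in[n+1]$, is nonzero, so $c_{\mu,\nu}:=\big(\tfrac{1}{n+1}\sum_{\alpha\in[n+1]}\tfrac{1}{\alpha!}|\mathbb{E}[X^\alpha]-\mathbb{E}[Y^\alpha]|^2\big)^{1/2}>0$, and with $\underline h(t):=c_{\mu,\nu}^{-1}t^{n/2}W_2(\mu*\rho_t,\nu*\rho_t)\to1$ the bound follows. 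For $p\in[1,2)$ this comparison is unavailable, so instead I would use $W_p\ge W_1$ and the easy direction of Kantorovich duality: for every $1$-Lipschitz $g$,
\[
W_1(\mu*\rho_t,\nu*\rho_t)\ \ge\ \int_{\R^d} g\,\dd\big(\mu*\rho_t-\nu*\rho_t\big).
\]
Taking $g(x)=\sqrt t\,g_0(x/\sqrt t)$ for a fixed compactly supported $1$-Lipschitz $g_0$ (so $g$ is again $1$-Lipschitz) and inserting the Hermite expansion of $f^\mu_t-f^\nu_t$ recalled below, the right-hand side equals $t^{-n/2}\big(\int_{\R^d} g_0\,\rho_1\,Q+O(t^{-1/2})\big)$, where $Q:=\sum_{\alpha\in[n+1]}\tfrac{1}{\alpha!}\big(\mathbb{E}[X^\alpha]-\mathbb{E}[Y^\alpha]\big)H_\alpha$ and the $H_\alpha$ are the Hermite polynomials normalized by $\exp(\langle x,v\rangle-|v|^2/2)=\sum_\alpha \tfrac{v^\alpha}{\alpha!}H_\alpha(x)$. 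Since $\mathbf{M}(n)$ forces $Q\not\equiv0$, one fixes $g_0$ with $\int g_0\,\rho_1\,Q\ne0$, which gives the lower bound with a positive (in general $p$-dependent) constant $c_{\mu,\nu}$ and an error factor $\underline h(t)\to1$.

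\emph{Upper bound.} I would build a transport plan between $\mu*\rho_t$ and $\nu*\rho_t$ in the spirit of Section~\ref{moser}. Writing $f^\mu_t,f^\nu_t$ for the (smooth, strictly positive) densities, set $f_s:=(1-s)f^\mu_t+sf^\nu_t$ and let $\phi_s$ solve the linearized Monge--Amp\`ere equation $\nabla\cdot(f_s\nabla\phi_s)=f^\mu_t-f^\nu_t$; then $(f_s,\nabla\phi_s)$ solves the continuity equation, and the Benamou--Brenier formula gives
\[
W_p(\mu*\rho_t,\nu*\rho_t)\ \le\ \int_0^1\Big(\int_{\R^d}|\nabla\phi_s|^p\,f_s\Big)^{1/p}\dd s,
\]
so it remains to bound $\|\nabla\phi_s\|_{L^p(f_s)}$ uniformly in $s$. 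The key computation is that $f^\mu_t(x)=\rho_t(x)\,\mathbb{E}\exp\!\big(\langle x,X\rangle/t-|X|^2/(2t)\big)$, which by the generating identity above equals $\rho_t(x)\sum_\alpha \tfrac{t^{-|\alpha|/2}}{\alpha!}\mathbb{E}[X^\alpha]\,H_\alpha(x/\sqrt t)$, and similarly for $\nu$. Under $\mathbf{M}(n)$ every Hermite shell of order $\le n$ cancels in the difference, so
\[
\big(f^\mu_t-f^\nu_t\big)(x)=\rho_t(x)\Big(t^{-(n+1)/2}Q(x/\sqrt t)+R_t(x)\Big),
\]
with $R_t$ collecting the shells of order $\ge n+2$. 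Because $\Delta-t^{-1}x\cdot\nabla$ scales $H_\alpha(x/\sqrt t)$ by $-|\alpha|/t$, to leading order the operator $\phi\mapsto\rho_t^{-1}\nabla\cdot(\rho_t\nabla\phi)$ is the rescaled Ornstein--Uhlenbeck generator, which is diagonal in the Hermite basis; inverting it on the leading term and differentiating yields $\nabla\phi_s(x)=-\tfrac{1}{n+1}t^{-n/2}\nabla Q(x/\sqrt t)+(\text{lower order})$. Changing variables $x=\sqrt t\,y$ then gives $\|\nabla\phi_s\|_{L^p(f_s)}=\tfrac{1}{n+1}t^{-n/2}\|\nabla Q\|_{L^p(\fg)}(1+o(1))\le c_{d,n,p,\beta}\,t^{-n/2}(1+o(1))$, since $\|Q\|_{L^2(\fg)}^2=\sum_{\alpha\in[n+1]}\tfrac{1}{\alpha!}|\mathbb{E}[X^\alpha]-\mathbb{E}[Y^\alpha]|^2$ is bounded in terms of $n$ and $\beta$ (by $\mathbf{E}(\beta)$), while passing to $\|\nabla Q\|_{L^p(\fg)}$ costs only degree-dependent factors on polynomials (the $L^p$-versus-$L^2$ step by hypercontractivity).

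The step I expect to be the main obstacle is making the last two displays rigorous, and it is exactly here that the hypothesis $t>(p-1)/\beta$ enters. First, one must show the Hermite tail $R_t$ is negligible in $L^p(\fg)$: by hypercontractivity the $L^p(\fg)$-norm of its $k$-th shell is at most $(p-1)^{k/2}$ times its $L^2(\fg)$-norm, which $\mathbf{E}(\beta)$ bounds by $(c_d/(\beta t))^{k/2}$; the resulting geometric series in $k\ge n+2$ converges precisely when $t\gtrsim(p-1)/\beta$, and then $R_t$ contributes only at order $t^{-(n+2)/2}$. Second, one must quantify the error from replacing $f_s$ by $\rho_t$ and the true variable-coefficient operator by the Ornstein--Uhlenbeck generator; here $\mathbf{E}(\beta)$ guarantees $f_s/\rho_{t'}$ lies between two constants close to $1$ for a slightly inflated variance $t'=t(1+o(1))$, so this correction is handled perturbatively (a Neumann series in weighted elliptic norms) at a relative cost vanishing as $t\to\infty$. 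Collecting these $o(1)$ contributions yields the functions $\overline h(t),\underline h(t)\to1$ in the statement.
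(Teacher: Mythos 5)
Your lower-bound strategy is essentially the paper's: both reduce to $W_1$ via Kantorovich--Rubinstein duality, insert the Hermite (equivalently $\Theta_t=Q+R$) decomposition of $\mu*\rho_t-\nu*\rho_t$, and test against a compactly supported Lipschitz function rescaled by $\sqrt t$. The paper's test function is a truncation of $\Theta_t$ itself, $f(x)=\phi(t^{-1/2}x)\Theta_t(t^{-1/2}x)$, which makes $\int f\,(\mu*\rho_t-\nu*\rho_t)\,dx=t^{-1/2}\int\phi\,|\Theta_t|^2\,d\fg$ manifestly nonnegative; you instead fix a compactly supported $g_0$ pairing nontrivially with the leading Hermite shell. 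Both work, and your remark that this argument already covers all $p\geq1$ (so the split at $p=2$ and the appeal to Theorem~\ref{Theorem:exact_p=2} are unnecessary) is correct.

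Your upper bound deviates from the paper's construction and contains a genuine gap, exactly at the step you flag as the main obstacle. The paper does \emph{not} solve the $s$-parametrized family of variable-coefficient elliptic problems $\nabla\cdot(f_s\nabla\phi_s)=\mu*\rho_t-\nu*\rho_t$. It solves the \emph{single} Gaussian-weighted equation $\nabla\cdot(\rho_t\nabla u)=\mu*\rho_t-\nu*\rho_t$ (i.e.\ $Lw=\Theta_t$ after rescaling) and then takes the \emph{non-gradient} velocity field $\xi_s=\rho_t\nabla u/m_s$, which still satisfies $\partial_s m_s+\nabla\cdot(m_s\xi_s)=0$. In Benamou--Brenier this produces the factor $\int_0^1(\rho_t/m_s)^{p-1}\,ds$, which is controlled using only the uniform pointwise \emph{lower} bound $m_s\geq e^{-c/t}\rho_t$ from Jensen's inequality. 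This is the point your sketch misses: the ratio $m_s/\rho_t$ is not uniformly bounded above, since $\mu*\rho_t/\rho_t=\E e^{\langle x,X\rangle/t-|X|^2/(2t)}$ grows in $|x|$. What the hypotheses actually give is a sandwich $c_1\rho_{t/a}\leq m_s\leq c_2\rho_{at}$ with $a=1+t^{-1/2}$ and $c_1,c_2\to1$ between two \emph{different} Gaussians, not a statement that $m_s/\rho_{t'}$ is close to $1$ for a single inflated $t'$ as you assert. Consequently the drift perturbation $\nabla\log(m_s/\rho_t)$ in the operator $\phi\mapsto m_s^{-1}\nabla\cdot(m_s\nabla\phi)$ is not small in any uniform norm, and the ``Neumann series in weighted elliptic norms'' you invoke would require $L^p$ estimates, uniform in $s$ and $t$, for these non-Gaussian weighted operators; these neither are supplied nor follow from the Gaussian Poincar\'e and Meyer inequalities that the paper uses. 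The missing idea is the decoupling the paper performs: keep the PDE Gaussian-weighted so that the Malliavin-calculus machinery applies, and absorb the discrepancy between $\rho_t$ and $m_s$ into the cheap scalar factor $\int_0^1(\rho_t/m_s)^{p-1}ds$ rather than into the elliptic operator itself. Your intuition about where the threshold $t>(p-1)/\beta$ enters---through $L^p$ control of the Hermite tail of $\Theta_t$---is correct and matches Lemma~\ref{Lemma:p-norm_of_Theta_t}.
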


We show the upper bound in Theorem~\ref{Theorem:upper_bound} in Section~\ref{moser}, where it follows from the same construction used to obtain sharp bounds in the $W_2$ case.
The lower bound follows from simpler ideas and appears in Section~\ref{section:lower_bound}.

Theorem~\ref{Theorem:upper_bound} implies the following corollary, which gives exact zeroth-order asymptotics for $W_p$.
\begin{Cor}\label{corollary:1}
Let $p\geq 1$. Assume $\mathbf{E}(\beta)$ holds for some $\beta>0$. Then we have
\begin{align*}
    \lim_{t\to\infty}W_p(\mu*\rho_t, \nu*\rho_t) = \big|\E X-\E Y\big|.
\end{align*}
\end{Cor}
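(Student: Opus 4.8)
The plan is to sandwich $W_p(\mu*\rho_t,\nu*\rho_t)$ between the constant $|\E X-\E Y|$, which is a lower bound for every $t$, and a quantity converging to $|\E X-\E Y|$ as $t\to\infty$. The lower bound does not require Theorem~\ref{Theorem:upper_bound}: Gaussian convolution preserves the mean, so $\mu*\rho_t$ and $\nu*\rho_t$ have (finite, by $\mathbf E(\beta)$) means $\E X$ and $\E Y$, and for any coupling $\pi$ of these two measures, writing $(X_t,Y_t)\sim\pi$, Jensen's inequality gives
\[
|\E X-\E Y| \;=\; \bigl|\mathbb{E}_\pi(X_t-Y_t)\bigr| \;\le\; \mathbb{E}_\pi|X_t-Y_t| \;\le\; \bigl(\mathbb{E}_\pi|X_t-Y_t|^p\bigr)^{1/p}.
\]
Taking the infimum over $\pi$ yields $W_p(\mu*\rho_t,\nu*\rho_t)\ge|\E X-\E Y|$ for all $t>0$.

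For the matching asymptotic upper bound, I would compare $\mu*\rho_t$ and $\nu*\rho_t$ to the Gaussians $\delta_{\E X}*\rho_t=\cN(\E X,tI)$ and $\delta_{\E Y}*\rho_t=\cN(\E Y,tI)$. These two Gaussians are translates of one another by $\E X-\E Y$, so $W_p(\delta_{\E X}*\rho_t,\delta_{\E Y}*\rho_t)=|\E X-\E Y|$ exactly, and two applications of the triangle inequality give
\[
W_p(\mu*\rho_t,\nu*\rho_t)\;\le\; W_p(\mu*\rho_t,\delta_{\E X}*\rho_t)\;+\;|\E X-\E Y|\;+\;W_p(\delta_{\E Y}*\rho_t,\nu*\rho_t).
\]
It therefore suffices to show that $W_p(\mu*\rho_t,\delta_{\E X}*\rho_t)\to 0$ and, symmetrically, $W_p(\delta_{\E Y}*\rho_t,\nu*\rho_t)\to 0$.

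Each of these limits follows from Theorem~\ref{Theorem:upper_bound}. Consider the pair $(\mu,\delta_{\E X})$: it satisfies $\mathbf E(\beta)$ (trivially for the point mass), and $\mu$ and $\delta_{\E X}$ share the same mean. If $\mu=\delta_{\E X}$ the distance is identically zero; otherwise $\mu$ has nonzero covariance, so its second-order moment tensor differs from that of $\delta_{\E X}$ while its first-order moments agree, i.e., $\mathbf M(1)$ holds for $(\mu,\delta_{\E X})$. Theorem~\ref{Theorem:upper_bound} (applied with $n=1$) then gives $W_p(\mu*\rho_t,\delta_{\E X}*\rho_t)\le c_{d,1,p,\beta}\,\overline h(t)\,t^{-1/2}$ for $t>(p-1)/\beta$ with $\overline h(t)\to 1$, hence this tends to $0$; the same reasoning handles the pair $(\delta_{\E Y},\nu)$. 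Combining the two displays and letting $t\to\infty$ yields $\lim_{t\to\infty}W_p(\mu*\rho_t,\nu*\rho_t)=|\E X-\E Y|$.

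The argument is essentially soft, so I do not anticipate a genuine obstacle. The one point that needs slight care is the reduction step: one must route through a pair of measures---here $(\mu,\delta_{\E X})$ and $(\delta_{\E Y},\nu)$---for which the moment-matching index is provably at least $1$, since Theorem~\ref{Theorem:upper_bound} is stated only for $n\in\N\setminus\{0\}$. Comparing against the point masses $\delta_{\E X}$ and $\delta_{\E Y}$ makes this automatic, because for any non-Dirac measure the second-order moments already differ from those of the Dirac at its mean (matching second moments would force zero covariance, i.e., a Dirac), so one is always in the regime $n\ge 1$ or in the trivial case where the compared measures coincide.
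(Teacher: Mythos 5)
Your proof is correct and follows the same broad strategy as the paper---reduce to a moment-matched pair via the triangle inequality, then invoke Theorem~\ref{Theorem:upper_bound}---but differs in two concrete ways. The paper routes through a single intermediate measure $\tilde\mu$, the law of $\tilde X = X - \E X + \E Y$, so that $W_p(\mu*\rho_t,\tilde\mu*\rho_t)=|\E X - \E Y|$ exactly and the two-sided triangle inequality $\bigl|W_p(\mu*\rho_t,\nu*\rho_t)-|\E X -\E Y|\bigr|\le W_p(\tilde\mu*\rho_t,\nu*\rho_t)$ delivers both the upper and lower bound at once. You instead route through two Dirac intermediaries $\delta_{\E X}$ and $\delta_{\E Y}$ and prove the lower bound $W_p\ge|\E X-\E Y|$ separately by Jensen, which is a perfectly good alternative. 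Your choice of Dirac intermediaries has a small advantage: the moment-matching index for $(\mu,\delta_{\E X})$ is automatically either $1$ (unless $\mu$ is itself that Dirac, in which case the distance vanishes), since a non-Dirac measure always has a nonzero covariance. The paper's intermediate pair $(\tilde\mu,\nu)$ in principle could match all moments, which would fall outside $\mathbf{M}(n)$---though under $\mathbf E(\beta)$ this forces $\tilde\mu=\nu$ and the corollary trivializes, a point the paper does not spell out but your construction sidesteps. Both proofs are sound; the paper's is slightly more compact, yours is slightly more careful about the edge case.
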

To obtain Corollary~\ref{corollary:1}, we let $\tilde \mu$ be the law of $\tilde X = X - \E X + \E Y$.
Clearly $W_p(\mu*\rho_t,\tilde \mu*\rho_t)=|\E X -\E Y|$ for all $p\geq 1$ and $t\geq 0$.
The triangle inequality implies
\begin{align*}
    \Big|W_p(\mu*\rho_t,\nu*\rho_t)-|\E X -\E Y|\Big|\leq W_p(\tilde \mu*\rho_t,\nu*\rho_t).
\end{align*}
Applying Theorem \ref{Theorem:upper_bound} to the measures $\tilde \mu$ and $\nu$ yields the claim.

\subsection{Asymptotics for \texorpdfstring{$f$}{f}-divergences}
Theorem~\ref{Theorem:exact_p=2} implies that though the 2-Wasserstein distance is highly nonlinear, its asymptotic behavior under Gaussian smoothing is entirely determined by linear functionals of the measures (i.e., their moments).
In fact, under the same conditions, we show that similar limiting behavior holds for the $\chi^2$ divergence and Kullback--Leibler divergence (relative entropy) between $\mu * \rho_t$ and $\nu * \rho_t$ as well.
Given two probability measures $\mu$ and $\nu$ with Lebesgue densities, recall
\begin{align*}
\chi^2(\mu,\nu) & = \int \bigg(\frac{\mu}{\nu}\bigg)^2\nu dx -1 = \int \frac{(\mu - \nu)^2}{\nu}dx. \\
\KLD(\mu\| \nu) & = \int \log \bigg(\frac{\mu}{\nu}\bigg) \mu dx\,.
\end{align*}
The $\chi^2$ and Kullback-Leibler divergence, as well as the total variation distance defined below, are examples of $f$-divergences, which are common measures of dissimilarity in information theory and statistics~\cite{Csi63,LieVaj06}
The following theorem shows that these divergences have the same asymptotic form as the squared $2$-Wasserstein distance, but decay at the rate $t^{-(n+1)}$ rather than $t^{-n}$.

\begin{Th}\label{Prop:chi^2_relative_entropy}
If $\mathbf{E}(\beta)$  and $\mathbf{M}(n)$ hold for some $\beta>0$ and some $n \in \N\cup \{0\}$, then
\begin{align}
    \lim_{t\to\infty }t^{n+1}\chi^2(\mu*\rho_t,\nu*\rho_t)& =\sum_{\alpha\in[n+1]}\frac{1}{\alpha!}\Big|\E X^\alpha - \E Y^\alpha\Big|^2; \label{eq:exact_asymptotics_chi^2}\\
    \lim_{t\to\infty }t^{n+1} \KLD(\mu*\rho_t\| \nu*\rho_t) & = \frac{1}{2}\sum_{\alpha\in[n+1]}\frac{1}{\alpha!}\Big|\E X^\alpha - \E Y^\alpha\Big|^2 \label{eq:exact_asymptotics_KL-div}.
\end{align}
\end{Th}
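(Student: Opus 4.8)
The plan is to linearize both divergences around the fact that the relative density of $\mu*\rho_t$ and $\nu*\rho_t$ converges to $1$ as $t\to\infty$. Writing $\rho_t$ for the Gaussian density in~\eqref{eqn:rho_def}, one has the elementary identity $(\mu*\rho_t)(x)=\rho_t(x)F_\mu(x,t)$ with $F_\mu(x,t):=\int e^{\frac{x\cdot y}{t}-\frac{|y|^2}{2t}}\,\mu(dy)$, and likewise $F_\nu$ for $\nu$. Substituting $x=\sqrt t\,G$ with $G\sim\fg$ then gives
\begin{equation*}
\chi^2(\mu*\rho_t,\nu*\rho_t)=\int\rho_t(x)\,\frac{\bigl(F_\mu(x,t)-F_\nu(x,t)\bigr)^2}{F_\nu(x,t)}\,dx=\E{\left[\frac{\bigl(F_\mu(\sqrt t\,G,t)-F_\nu(\sqrt t\,G,t)\bigr)^2}{F_\nu(\sqrt t\,G,t)}\right]}.
\end{equation*}
For fixed $z$ one has $e^{\frac{z\cdot y}{\sqrt t}-\frac{|y|^2}{2t}}=\sum_{j\geq0}t^{-j/2}P_j(z,y)$, where $P_j(z,y)$ is the coefficient of $s^j$ in $e^{s(z\cdot y)-\frac{s^2}{2}|y|^2}$; rescaling $y$ shows $P_j(z,\cdot)$ is homogeneous of degree $j$ in $y$, so $\int P_j(z,y)\,\mu(dy)$ depends only on the moment tensors of order exactly $j$ and thus equals $\int P_j(z,y)\,\nu(dy)$ for $j\leq n$ under $\mathbf{M}(n)$. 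Subtracting these vanishing terms,
\begin{equation*}
F_\mu(\sqrt t\,z,t)-F_\nu(\sqrt t\,z,t)=\int\Bigl(e^{\frac{z\cdot y}{\sqrt t}-\frac{|y|^2}{2t}}-\sum_{j=0}^n t^{-j/2}P_j(z,y)\Bigr)(\mu-\nu)(dy),
\end{equation*}
and Taylor's theorem in $s=t^{-1/2}$ gives $t^{(n+1)/2}\bigl(F_\mu(\sqrt t\,z,t)-F_\nu(\sqrt t\,z,t)\bigr)\to Q(z):=\int P_{n+1}(z,y)\,(\mu-\nu)(dy)$ pointwise, while $F_\nu(\sqrt t\,z,t)\to1$. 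Granting the interchange of limit and expectation (treated in the last paragraph) this yields $\lim_{t\to\infty}t^{n+1}\chi^2(\mu*\rho_t,\nu*\rho_t)=\E{[Q(G)^2]}$.

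I would then compute $\E{[Q(G)^2]}$ from one Gaussian moment identity. Since $\E{[e^{a\cdot G}]}=e^{|a|^2/2}$, for all $s,s',y,y'$,
\begin{equation*}
\E{\Bigl[\Bigl(\sum_j s^jP_j(G,y)\Bigr)\Bigl(\sum_{j'}s'^{j'}P_{j'}(G,y')\Bigr)\Bigr]}=\E{\bigl[e^{G\cdot(sy+s'y')-\frac{s^2}{2}|y|^2-\frac{s'^2}{2}|y'|^2}\bigr]}=e^{ss'\langle y,y'\rangle},
\end{equation*}
and comparing coefficients of $s^js'^{j'}$ gives $\E{[P_j(G,y)P_{j'}(G,y')]}=\delta_{jj'}\langle y,y'\rangle^j/j!$. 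Using $j=j'=n+1$ in the double integral defining $\E{[Q(G)^2]}$ and expanding $\langle y,y'\rangle^{n+1}=\sum_{\alpha\in[n+1]}\frac{(n+1)!}{\alpha!}\,y^\alpha(y')^\alpha$ gives
\begin{equation*}
\E{[Q(G)^2]}=\frac{1}{(n+1)!}\sum_{\alpha\in[n+1]}\frac{(n+1)!}{\alpha!}\bigl(\E X^\alpha-\E Y^\alpha\bigr)^2=\sum_{\alpha\in[n+1]}\frac1{\alpha!}\bigl|\E X^\alpha-\E Y^\alpha\bigr|^2,
\end{equation*}
which is~\eqref{eq:exact_asymptotics_chi^2}.

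For the relative entropy I would show $\KLD(\mu*\rho_t\|\nu*\rho_t)=\tfrac12\chi^2(\mu*\rho_t,\nu*\rho_t)+o(t^{-(n+1)})$. With $r:=\frac{\mu*\rho_t}{\nu*\rho_t}-1=\frac{F_\mu-F_\nu}{F_\nu}$, which tends to $0$ pointwise at rate $t^{-(n+1)/2}$, write $\KLD(\mu*\rho_t\|\nu*\rho_t)=\int(1+r)\log(1+r)\,(\nu*\rho_t)\,dx=\int\bigl(r+\tfrac12r^2+\psi(r)\bigr)(\nu*\rho_t)\,dx$, where $\psi(u)=(1+u)\log(1+u)-u-\tfrac12u^2=O(|u|^3)$ near $0$. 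Since $\int r\,(\nu*\rho_t)\,dx=\int(\mu-\nu)*\rho_t\,dx=0$ and $\tfrac12\int r^2\,(\nu*\rho_t)\,dx=\tfrac12\chi^2(\mu*\rho_t,\nu*\rho_t)$, it remains to bound $\int\psi(r)\,(\nu*\rho_t)\,dx$: on the region where $|r|$ is small this is $O(t^{-3(n+1)/2})=o(t^{-(n+1)})$ because $|\psi(r)|\lesssim|r|^3$, and the complementary region is handled by the same tail estimates as for $\chi^2$. Together with~\eqref{eq:exact_asymptotics_chi^2} this gives~\eqref{eq:exact_asymptotics_KL-div}.

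The main obstacle is the dominated-convergence step. One must bound the rescaled Taylor remainder $t^{(n+1)/2}\bigl(e^{\frac{z\cdot y}{\sqrt t}-\frac{|y|^2}{2t}}-\sum_{j=0}^n t^{-j/2}P_j(z,y)\bigr)$ by a function integrable in $y$ against $\mu+\nu$ and, after $z=\sqrt t\,G$, integrable in $G$ against $\fg$, uniformly for large $t$. Using the Lagrange remainder in $s=t^{-1/2}$ together with $e^{\xi(z\cdot y)-\frac{\xi^2}{2}|y|^2}\leq e^{\frac{\xi^2}{2a}|z|^2+\frac a2|y|^2}$ for any $a>0$ and $\xi\in[0,t^{-1/2}]$, one can pick $a<\beta$ so that $e^{\frac a2|y|^2}$ times any fixed polynomial in $|y|$ is $(\mu+\nu)$-integrable (up to the harmless recentering built into $\mathbf{E}(\beta)$), while the prefactor $e^{\frac{\xi^2}{2a}|z|^2}=e^{O(|G|^2/t)}$ is dominated by a fixed $\fg$-integrable function once $t$ is large; the denominator is controlled by the Jensen bound $F_\nu(\sqrt t\,z,t)\geq e^{\frac{z\cdot\E Y}{\sqrt t}-\tfrac{1}{2t}\E{|Y|^2}}$, whose reciprocal is again $\fg$-integrable after rescaling. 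The delicate parts are upgrading these pointwise bounds to genuine uniform integrability, verifying that the polynomial factors coming from the remainder do not destroy exponential integrability, and confirming the analogous estimates for the $\psi$-term in the relative-entropy expansion.
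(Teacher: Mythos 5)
Your conceptual skeleton matches the paper's: expand the exponential kernel $\eta$ in powers of $t^{-1/2}$, kill the first $n$ terms via $\mathbf{M}(n)$, identify the leading order with a degree-$(n+1)$ Hermite-type polynomial whose $L^2(\fg)$ norm is the claimed constant, and reduce $\KLD$ to $\chi^2$ via a Taylor expansion of $\log$. Your generating-function identity $\E[P_j(G,y)P_{j'}(G,y')] = \delta_{jj'}\langle y,y'\rangle^j/j!$ is a compact equivalent of the paper's Hermite orthogonality (Lemma~\ref{lemma:hermite_poly_orthognoality}) applied to the decomposition $\Theta_t = Q + R$; the $P_j$ you define are exactly the paper's $a_j$, and $Q(z)$ is the paper's $Q(z)$ after rescaling. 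The real difference is how the limit is passed. You substitute $x=\sqrt t\,G$, prove pointwise convergence of the integrand, and invoke dominated convergence --- this makes the uniform-integrability step (bounding the Taylor remainder and $1/F_\nu$ by $\fg$-integrable dominating functions, uniformly in $t$) the crux of the whole argument, which you correctly flag as the main obstacle. The paper avoids this by never passing a pointwise limit under the integral: it sandwiches the denominator $\nu*\rho_t$ between explicit Gaussians $c_{1,t}\rho_{t/a}$ and $c_{2,t}\rho_{at}$ (Jensen plus Cauchy--Schwarz, with $a=1+t^{-1/2}$) and then evaluates $\int(\mu*\rho_t-\nu*\rho_t)^2/\rho_{zt}\,dx$ in closed form in Lemma~\ref{Lemma:perturbed_gaussian_asymptotics}, using the orthogonality $\int QR\,d\fg=0$ and the remainder estimate of Lemma~\ref{lemma:r_n+1_integral_est}. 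This is technically lighter because the error in replacing $\rho_{zt}$ by $\rho_t$ is an $\mathcal{O}(t^{-1/2})$ multiplicative perturbation that is controlled by a single H\"older step. For the relative entropy, your second-order expansion $(1+u)\log(1+u)=u+u^2/2+\psi(u)$ with $\psi(u)=\mathcal O(|u|^3)$ is an alternative to the paper's first-order expansion $\log(1+u)=u+\int_0^1\frac{(s-1)u^2}{(1+su)^2}ds$; both lead to $\KLD=\frac12\chi^2+o(t^{-(n+1)})$, but the paper's integral remainder is directly sandwiched by the same perturbed-Gaussian integrals (independent of $s$), whereas your cubic remainder $\int\psi(r)(\nu*\rho_t)\,dx$ requires a separate estimate of $\int|r|^3(\nu*\rho_t)\,dx$ and a tail argument for the region where $|r|$ is not small, which you acknowledge but do not carry out. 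None of this is wrong, but if you want to close the gaps it is worth looking at how Lemma~\ref{Lemma:perturbed_gaussian_asymptotics} replaces pointwise dominated convergence by explicit Gaussian comparisons.
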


We note that~\eqref{eq:exact_asymptotics_KL-div} combined with Theorem~\ref{Theorem:exact_p=2} implies that
\begin{equation}\label{eq:asymptotic_talagrand}
W_2^2(\mu*\rho_t,\nu*\rho_t) \sim \frac{2 t}{n+1} \KLD(\mu*\rho_t\| \nu*\rho_t) \quad \text{as $t \to \infty$.}
\end{equation}
This can be compared with Talagrand's inequality~\cite{Tal96}, which states that the measure $\rho_t$ satisfies
\begin{equation*}
W_2^2(\mu, \rho_t) \leq 2 t \KLD(\mu \| \rho_t) ,\quad \forall \mu\,.
\end{equation*}
Equation \eqref{eq:asymptotic_talagrand} says that $\mu*\rho_t$ and $\nu*\rho_t$ asymptotically enjoy a similar bound.

Finally, we prove exact asymptotics for the total variation distance, defined by
\begin{equation*}
d_{\mathrm{TV}}(\mu, \nu) = \int |\mu - \nu| dx\,.
\end{equation*}
In contrast to the asymptotics for $W_2$, $\chi^2$, and $\KLD$, which have an $L^2$ flavor, the asymptotic behavior of $d_{\mathrm{TV}}$ is governed by $L^1$.

\begin{Th}\label{Theorem:asymptotic_expansion_TV}
Suppose that $\mu$ and $\nu$ have finite $(n+2)$th moments and $\mathbf{M}(n)$ holds for some $n\in\N\cup\{0\}$. Then 
\begin{align*}
    \lim_{t\to\infty} t^\frac{n+1}{2}d_{\mathrm{TV}}(\mu*\rho_t,\nu*\rho_t) = \frac{1}{2}\int \bigg|\sum_{\alpha \in [n+1]}\frac{1}{\alpha!}\Big(\E X^\alpha - \E Y^\alpha\Big)H_\alpha(x)\bigg|\fg(dx)\,,
\end{align*}
where $H_\alpha$ is the Hermite polynomial defined by
\begin{alignat}{2}
H_\alpha(x) & = \prod_{i=1}^d H_{\alpha_i} (x_i)\,, \quad \quad && x \in \R^d,\ \alpha \in \N^d,\label{eq:def_H_alpha}\\
H_m(x) & = (-1)^m e^{\frac{x^2}{2}} \frac{d^m}{dx^m} e^{- \frac{x^2}{2}}\,, \quad && x \in \R,\ m \in \N\,.\label{eq:def_hermite_poly}
\end{alignat}
\end{Th}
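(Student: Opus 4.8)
The plan is to rescale space diffusively, $x=\sqrt t\,u$, and to reduce the claim to an $L^1$ statement about rescaled densities. Writing $p_t$ and $q_t$ for the (smooth, strictly positive) Lebesgue densities of $\mu*\rho_t$ and $\nu*\rho_t$, the change of variables $\dd x=t^{d/2}\,\dd u$ gives
\[
d_{\mathrm{TV}}(\mu*\rho_t,\nu*\rho_t)=\int_{\R^d}\Big|\,t^{d/2}p_t(\sqrt t\,u)-t^{d/2}q_t(\sqrt t\,u)\,\Big|\,\dd u .
\]
Starting from~\eqref{eqn:rho_def} one computes $t^{d/2}p_t(\sqrt t\,u)=\fg(u)\int_{\R^d}\exp\!\big(u\cdot\tfrac{y}{\sqrt t}-\tfrac1{2t}|y|^2\big)\,\mu(\dd y)$, and the integrand is exactly the exponential generating function of the Hermite polynomials, $\exp(u\cdot w-\tfrac12|w|^2)=\sum_{\alpha}\tfrac{w^\alpha}{\alpha!}H_\alpha(u)$ with $w=y/\sqrt t$. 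So, up to the common prefactor $\fg(u)$, the rescaled densities are power series in $t^{-1/2}$ whose coefficients pair the moment tensors of $\mu$ and $\nu$ against the $H_\alpha$.

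Since only finite $(n+2)$th moments are assumed, I would not use the full series but apply Taylor's formula with integral remainder to $w\mapsto e^{u\cdot w-|w|^2/2}$ at $w=0$ to order $n+1$, using the identity $\partial_w^\alpha e^{u\cdot w-|w|^2/2}=H_\alpha(u-w)\,e^{u\cdot w-|w|^2/2}$. Integrating the truncated expansion against $\mu$ and $\nu$ and subtracting, all terms of order at most $n$ cancel by $\mathbf{M}(n)$, leaving
\[
t^{d/2}(p_t-q_t)(\sqrt t\,u)=t^{-\frac{n+1}{2}}\,\fg(u)\,P(u)+\fg(u)\big(\mathcal R^\mu_t(u)-\mathcal R^\nu_t(u)\big),\qquad P(u):=\sum_{\alpha\in[n+1]}\tfrac1{\alpha!}\big(\E X^\alpha-\E Y^\alpha\big)H_\alpha(u),
\]
where $\mathcal R^\mu_t,\mathcal R^\nu_t$ are the integrated Taylor remainders. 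The crucial simplification is that $\fg(u)$ times the Gaussian factor thrown off by the generating function collapses into a shifted Gaussian, so that
\[
\fg(u)\,\mathcal R^\mu_t(u)=c_{n,d}\sum_{|\alpha|=n+2}\frac1{\alpha!}\int_{\R^d}\frac{y^\alpha}{t^{(n+2)/2}}\Big(\int_0^1(1-s)^{n+1}H_\alpha\big(u-\tfrac{sy}{\sqrt t}\big)\,e^{-\frac12|u-sy/\sqrt t|^2}\,\dd s\Big)\mu(\dd y).
\]
Multiplying the previous display by $t^{(n+1)/2}$, taking absolute values and integrating over $u$, the reverse triangle inequality reduces Theorem~\ref{Theorem:asymptotic_expansion_TV} to the single estimate $t^{(n+1)/2}\int_{\R^d}|\fg(u)\mathcal R^\mu_t(u)|\,\dd u\to0$ together with its analogue for $\nu$.

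This estimate is the main obstacle, and it is delicate precisely because only polynomial moments are available: a pointwise-in-$u$ bound on $\fg(u)\mathcal R^\mu_t(u)$ would call for an exponential moment of the type $\mathbf{E}(\beta)$, which is not given here. The remedy is to integrate in $u$ before integrating in $y$: the integral $\int_{\R^d}|H_\alpha(v)|\,e^{-|v|^2/2}\,\dd v$ is finite and unchanged under translating $v$, so Tonelli's theorem applied to the display above, together with the multinomial bound $\sum_{|\alpha|=n+2}|y^\alpha|/\alpha!\le c_d|y|^{n+2}$, yields
\[
\int_{\R^d}\big|\fg(u)\,\mathcal R^\mu_t(u)\big|\,\dd u\;\le\;\frac{c_{n,d}}{t^{(n+2)/2}}\int_{\R^d}|y|^{n+2}\,\mu(\dd y)\;=\;\frac{c_{n,d}}{t^{(n+2)/2}}\,\E{|X|^{n+2}},
\]
so that $t^{(n+1)/2}\int_{\R^d}|\fg\,\mathcal R^\mu_t|\,\dd u\le c_{n,d}\,\E{|X|^{n+2}}\,t^{-1/2}\to0$ (and likewise for $\nu$); the finiteness of the $(n+2)$th moments of $\mu$ and $\nu$ is used exactly here. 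Letting $t\to\infty$ gives $\lim_{t\to\infty}t^{(n+1)/2}d_{\mathrm{TV}}(\mu*\rho_t,\nu*\rho_t)=\int_{\R^d}|P(u)|\,\fg(\dd u)$, which is the asserted value (up to the factor $\tfrac12$ fixed by the normalization convention for $d_{\mathrm{TV}}$).

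Two remarks close the argument. The limiting constant is strictly positive whenever $\mathbf{M}(n)$ holds: the polynomials $\{H_\alpha:\alpha\in[n+1]\}$ have distinct leading monomials, hence are linearly independent, so $P$ is not identically zero, while $\int_{\R^d}H_\alpha(u)\,\fg(\dd u)=0$ for $|\alpha|=n+1$ forces $P$ to change sign and hence $\int_{\R^d}|P|\,\fg(\dd u)>0$. Moreover, in contrast with the $\chi^2$ and Kullback--Leibler statements, no lower bound on $q_t$ — and hence no tail condition $\mathbf{E}(\beta)$ — is needed, since the total variation distance involves no division by the density.
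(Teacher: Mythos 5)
Your proposal is correct and follows essentially the same route as the paper: diffusively rescale, Taylor-expand the kernel $\exp(u\cdot w - \tfrac12|w|^2)$ to order $n+1$ via the Hermite generating function, cancel the low-order terms by $\mathbf{M}(n)$, and bound the remainder in $L^1$ using Tonelli together with the translation-invariance of $\int |H_\alpha|\,\fg$ — which is exactly the $p=1$ content of the paper's Lemma~\ref{lemma:r_n+1_integral_est} (proved there via Minkowski's integral inequality, equivalent to Tonelli at $p=1$). The only cosmetic slip is in the closing positivity remark: having shown $P\not\equiv 0$ from linear independence of $\{H_\alpha:\alpha\in[n+1]\}$, the conclusion $\int|P|\,\dd\fg>0$ is immediate for a polynomial, and the "mean zero forces a sign change" observation is unnecessary.
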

As a consequence of the fact that the Hermite polynomials form an orthogonal basis for $L^2(\R^d, \mathfrak g)$, the condition $\mathbf{M}(n)$ implies that the integral in Theorem~\ref{Theorem:asymptotic_expansion_TV} is strictly positive.
Note that Theorem~\ref{Theorem:asymptotic_expansion_TV} does not require assuming that $\mu$ and $\nu$ satisfy the exponential tail condition $\mathbf{E}(\beta)$ for any nonzero $\beta$.

Proofs of Theorems~\ref{Prop:chi^2_relative_entropy} and~\ref{Theorem:asymptotic_expansion_TV} appear in Section~\ref{section:chi^2,KL-div}.

\section{Upper bounds via the Moser coupling}\label{moser}
The goal of this section is to prove upper bounds on the quantity $W_p(\mu * \rho_t, \nu * \rho_t)$ by exhibiting a particular coupling between $\mu * \rho_t$ and $\nu * \rho_t$.
This coupling is obtained by a method due to~\citet{Mos65}.
We assume throughout this section that $\mathbf{M}(n)$ holds for some $n \in \N \setminus \{0\}$ and will handle the case $\mathbf{M}(0)$ separately.
In particular, we assume in this section without loss of generality that
\begin{align}\label{eq:assumption_mean_0_EX=EY=0}
    \E X= \E Y=0.
\end{align}

We first show how to motivate the Moser coupling on a purely heuristic level, following~\citet{CarLucPar14}.
If we assume the existence of a suitably regular map $T$ pushing $\mu * \rho_t$ to $\nu*\rho_t$, then this map must satisfy the Monge-Amp\`ere equation:
\begin{equation*}
\mu * \rho_t(x) = \nu*\rho_t(T(x)) \, \mathbf J_T(x)\,,
\end{equation*}
where $\mathbf J_T$ is the Jacobian determinant of $T$ at $x$.
Let us linearize this equation by assuming that $\mu*\rho_t$ and $\nu*\rho_t$ are close to $\rho_t$, so that we can write $\mu * \rho_t = (1 + \delta_\mu)\rho_t $ and $\nu * \rho_t = (1 + \delta_\nu)\rho_t$, where $\delta_\mu$ and $\delta_\nu$ are small.
Under the additional assumption that $T(x) = x + \delta_T(x)$ for a small perturbation $\delta_T(x)$, we have the approximation $\mathbf J_T(x) \approx 1 + \nabla \cdot \delta_T(x)$.
Combining these approximations yields the first-order expansion
\begin{equation*}
\nabla \cdot \delta_T(x) + (\nabla \log \rho_t(x)) \cdot \delta_T(x) = \delta_\mu(x) - \delta_\nu(x) = (\mu*\rho_t(x) - \nu*\rho_t(x)) \rho_t^{-1}(x)\,.
\end{equation*}
Brenier's theorem suggests writing $\delta_T = \nabla u$ for some $u: \R^d \to \R$.
Using the definition of $\rho_t$, we obtain
\begin{align}\label{eq:PDE_for_u}
    \Delta u(x) - t^{-1}x \cdot \nabla u(x) = (\mu*\rho_t(x)-\nu*\rho_t(x))\rho_t^{-1}(x)\,.
\end{align}
The Moser coupling is finally defined by using a solution to~\eqref{eq:PDE_for_u} to construct a vector field that evolves $\mu*\rho_t$ into $\nu*\rho_t$.

In the remainder of this section, we first give the rigorous details of this construction.
We then prove upper bounds on the cost of this coupling and, in the special case $ p = 2$, develop exact asymptotics.

\subsection{Construction of the Moser coupling}
For each fixed $t > 0$, we study the key equation~\eqref{eq:PDE_for_u}.
There is a weak solution $u$ to this equation satisfying $u\in C^{1}_{\mathrm{loc}}(\R^d)$ (see Lemma \ref{lemma:w_properties} below).
Hence, $\nabla u$ makes sense pointwise.
We now show how to use such a solution to construct a coupling.

For $s\in[0,1]$, define the linear interpolation
\begin{align}\label{eq:def_m_s}
    m_s = (1-s)(\mu*\rho_t)+s(\nu*\rho_t)
\end{align}
and the vector field
\begin{align*}
    \xi_s(x)=\frac{\rho_t\nabla u(x)}{m_s}.
\end{align*}
Using \eqref{eq:PDE_for_u}, one can check
\begin{align*}
    \partial_s m_s + \nabla \cdot (m_s\xi_s)=0.
\end{align*}
This allows us to apply the Benamou--Brenier formula \cite[Corollary 3.2 and Remark 3.3]{brasco2012survey} to obtain
\begin{align}\label{eq:W_2_est_BB_formula}
\begin{split}
        W^p_p(\mu*\rho_t, \nu*\rho_t)&\leq \int_0^1\int_{\R^d}|\xi_s(x)|^p m_s(x)dx ds\\ & =\int_{\R^d}|\nabla u(x)|^p\bigg(\int_0^1 \Big(\frac{\rho_t(x)}{m_s(x)}\Big)^{p-1}ds\bigg) \rho_t(x) dx.
\end{split}
\end{align}

To estimate the term inside parentheses in the above display, we first establish a lower bound for $\mu*\rho_t$ (and similarly for $\nu*\rho_t$):
\begin{align}\label{eq:mu*rho_t_lower_bound}
\begin{split}
    \mu*\rho_t(x)&= (2\pi t)^{-\frac{d}{2}}\E e^{-\frac{1}{2t}|x-X|^2}\geq (2\pi t)^{-\frac{d}{2}} e^{-\frac{1}{2t}\E|x-X|^2}\\
    &=\rho_t(x) e^{\frac{1}{t}\langle x, \E X\rangle-\frac{1}{2t}\E |X|^2}=\rho_t(x) e^{-\frac{1}{2t}\E|X|^2}.
\end{split}
\end{align}
Here, we used Jensen's inequality and the assumption \eqref{eq:assumption_mean_0_EX=EY=0}. 
By \eqref{eq:def_m_s},  \eqref{eq:mu*rho_t_lower_bound} and a similar lower bound for $\nu*\rho_t$, we obtain
\begin{align*}
    m_s(x)\rho_t^{-1}(x)\geq e^{-\frac{1}{2t}(\E|X|^2\vee \E|Y|^2)}, \quad x\in\R^d,
\end{align*}
which then gives
\begin{align*}
    \int_0^1 \Big(\frac{\rho_t(x)}{m_s(x)}\Big)^{p-1}ds\leq  e^{\frac{p-1}{2t}(\E|X|^2\vee \E|Y|^2)}, \quad x\in\R^d.
\end{align*}
Plug this into \eqref{eq:W_2_est_BB_formula} and apply a change of variables to see
\begin{align}\label{eq:upper_bound_p>1_step_2}
    W^p_p(\mu*\rho_t, \nu*\rho_t) & \leq e^{\frac{p-1}{2t}(\E|X|^2\vee \E|Y|^2)}\int_{\R^d}|\nabla u(x)|^p \rho_t(x) dx \nonumber\\
    & =  e^{\frac{p-1}{2t}(\E|X|^2\vee \E|Y|^2)} \int |\nabla u(t^\half x)|^p \fg(dx).
\end{align}

To evaluate the integral appearing in~\eqref{eq:upper_bound_p>1_step_2}, we first define some notation.
The following objects will appear many times in this paper:
\begin{align}\label{eq:def_eta}
\begin{split}
    \eta(x, y) &=\exp\big(\langle x, y\rangle - \thalf|y|^2\big),\\
    \Theta_t(x)&=t^\half\Big(\E \eta(x, t^{-\frac{1}{2}}X)-\E \eta(x, t^{-\frac{1}{2}}Y)\Big).
\end{split}
\end{align}
By this definition, we immediately have
\begin{align}\label{eq:formula_difference_densities}
    (\mu*\rho_t-\nu*\rho_t)(x) = t^{-\half }\Theta_t(t^{-\half}x)\rho_t(x), \quad x\in\R^d.
\end{align}
Let us introduce
\begin{align}\label{eq:def_w}
    w(x)=t^{-\half} u (t^\half x), \quad x\in\R^d
\end{align}
and the Ornstein--Uhlenbeck operator
\begin{align}\label{eq:def_L_OU_operator}
    L=\Delta - x\cdot \nabla
\end{align}
Hence, due to \eqref{eq:formula_difference_densities} and \eqref{eq:PDE_for_u}, we know $w$  solves
\begin{align}\label{eq:Lw_eq}
    L w =\Theta_t\,.
\end{align}

Adopting the notation from the Malliavin calculus, we write the gradient $\nabla$ as $D$. Under this notation, \eqref{eq:upper_bound_p>1_step_2} becomes
\begin{align}\label{eq:BB_after_change_of_var}
    W^p_p(\mu*\rho_t, \nu*\rho_t) \leq  e^{\frac{p-1}{2t}(\E|X|^2\vee \E|Y|^2)} \int |D w|^pd\fg.
\end{align}
Our upper bounds follow from analysis of~\eqref{eq:BB_after_change_of_var}.
We first show how to obtain an upper bound of the right order when $p \neq 2$ before performing a more careful argument for the $p = 2$ case.

\subsection{A general upper bound}
Our first bound shows that~\eqref{eq:BB_after_change_of_var} is of order $t^{-np/2}$ for any $p \geq 1$. 
This will suffice to prove the upper bound of Theorem~\ref{Theorem:upper_bound}.

We introduce the following notion. For $k\in\N$, let $\mathbb{D}^{k,p}$ be the completion of smooth functions on $\R^d$,  whose derivatives grow at most polynomially, under the norm
\begin{align}\label{eq:sobolev_norm}
    \|\cdot\|_{k,p}=\bigg(\sum_{j=0}^k\int |D_j\cdot|^pd\fg\bigg)^\frac{1}{p},
\end{align}
where $D_j =\partial_j$. The space $\mathbb{D}^{k,p}$ is the Sobolev space with $\fg$ as the underlying measure. We collect useful properties of $w$ in the lemma stated below, the proof of which is given in Section \ref{section:proof_of_lemma_w_properties}.
\begin{Lemma}\label{lemma:w_properties}
Under the assumption \eqref{eq:assumption_mean_0_EX=EY=0}, there is a weak solution $w$ to \eqref{eq:Lw_eq}, which also satisfies 
\begin{enumerate}
    \item \label{item:1_w} $w\in \mathbb{D}^{2,p}$ for all $p\in[1,\infty)$, and $w\in C^1_{\mathrm{loc}}(\R^d)$;
    \item \label{item:2_w} the mean of $Dw$ is zero, namely, $\int Dw d\fg =0$.
    \item \label{item:3_w} there is a constant $c_p>0$ depending only on $p$ such that
    \begin{align*}
        \int |D^2w|^p d\fg \leq c_p\int |Lw|^p d\fg.
    \end{align*}
\end{enumerate}
\end{Lemma}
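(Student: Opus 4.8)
We want to prove Lemma~\ref{lemma:w_properties}: existence of a weak solution $w$ to $Lw = \Theta_t$ (where $L = \Delta - x\cdot\nabla$ is the Ornstein–Uhlenbeck generator) with the regularity, zero-mean, and Calderón–Zygmund-type properties listed. The plan is to work entirely in the Gaussian Sobolev scale $\mathbb{D}^{k,p}$ and exploit the spectral theory of $L$ on $L^2(\fg)$ together with the boundedness of Gaussian Riesz transforms (Meyer's theorem / Gaussian Calderón–Zygmund theory).

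First I would verify that $\Theta_t \in L^2(\fg)$ and has zero Gaussian mean. The expansion $\eta(x,y) = \exp(\langle x,y\rangle - \tfrac12|y|^2) = \sum_\alpha \frac{y^\alpha}{\alpha!} H_\alpha(x)$ is the generating function for Hermite polynomials, so plugging $y = t^{-1/2}X$ (resp.\ $t^{-1/2}Y$) and taking expectations gives $\Theta_t(x) = t^{1/2}\sum_\alpha \frac{t^{-|\alpha|/2}}{\alpha!}(\E X^\alpha - \E Y^\alpha) H_\alpha(x)$; under $\mathbf{M}(n)$ the terms with $|\alpha|\le n$ vanish, so the lowest-order surviving term is degree $n+1$. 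In particular the coefficient of $H_0 = 1$ vanishes, which gives $\int \Theta_t \dd\fg = 0$ (needed for solvability, since $L$ annihilates constants), and the exponential tail condition $\mathbf{E}(\beta)$ for $t$ large enough guarantees $\E\eta(x,t^{-1/2}X)$ is finite and the Hermite series converges in $L^2(\fg)$ with all moments finite — so $\Theta_t \in L^q(\fg)$ for every $q < \infty$. Then, since $L$ has eigenvalues $-|\alpha|$ on the Hermite polynomial $H_\alpha$, the formal solution is $w = -\sum_{|\alpha|\ge n+1} \frac{1}{|\alpha|}\widehat{\Theta_t}(\alpha) H_\alpha$, i.e.\ $w = L^{-1}\Theta_t$ where $L^{-1}$ is the pseudo-inverse acting on the orthogonal complement of constants; this is well-defined in $L^2(\fg)$ and $\int w\dd\fg = 0$.

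For part (1), boundedness in $\mathbb{D}^{2,p}$ for all $p\in[1,\infty)$, I would use the identity $D^2 w = (\text{Gaussian second-order Riesz transform})\,\Theta_t$, formally $D^2 L^{-1}$, which is bounded on $L^p(\fg)$ for $1<p<\infty$ by Meyer's multiplier theorem (equivalently, the boundedness of higher-order Gaussian Riesz transforms, see e.g.\ Pisier's proof or Gundy's). This immediately gives the estimate in part (3), $\int|D^2w|^p\dd\fg \le c_p \int|Lw|^p\dd\fg = c_p\int|\Theta_t|^p\dd\fg$, for $1<p<\infty$; the endpoint $p=1$ in the $\mathbb{D}^{2,p}$ membership of part (1) follows because $\Theta_t\in L^q(\fg)$ for some $q>1$, hence $w\in\mathbb{D}^{2,q}\subset\mathbb{D}^{2,1}$ by Hölder (one does not need a Riesz transform bound at $p=1$ itself). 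The local $C^1$ regularity is a consequence of interior elliptic regularity: $w$ solves $\Delta w = x\cdot\nabla w + \Theta_t$, and since $w, \Theta_t \in L^q_{\mathrm{loc}}$ for large $q$, a bootstrap using standard $W^{2,q}_{\mathrm{loc}}$ Calderón–Zygmund estimates (the coefficient $x$ is smooth) pushes $w$ into $W^{2,q}_{\mathrm{loc}}$, and Sobolev embedding with $q > d$ gives $w\in C^{1}_{\mathrm{loc}}$. Part (2), $\int Dw\dd\fg = 0$: by Gaussian integration by parts $\int \partial_j w \dd\fg = \int w\, x_j \dd\fg = \langle w, H_1^{(j)}\rangle_{L^2(\fg)}$, which vanishes because $w$ has no Hermite component of degree $1$ (all its components have degree $\ge n+1 \ge 2$, using $n\ge 1$ from \eqref{eq:assumption_mean_0_EX=EY=0}'s standing hypothesis $\mathbf{M}(n)$ with $n\in\N\setminus\{0\}$).

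The main obstacle — or at least the most delicate point — is matching up the \emph{weak} solution concept with the spectral construction: one must check that $w$ defined by the Hermite series is genuinely a weak (distributional, or integration-by-parts-against-test-functions-w.r.t.\ $\fg$) solution of $Lw = \Theta_t$, and that the Riesz transform bound proved on the span of Hermite polynomials extends to the closure $\mathbb{D}^{2,p}$ by density — this needs the polynomials to be dense in $L^p(\fg)$, which holds under the exponential integrability, and a uniform-in-$p$ tracking of constants is not needed since $p$ is fixed. A secondary technical point is justifying the termwise differentiation of the Hermite series (convergence in $\mathbb{D}^{2,p}$ of the partial sums), which again follows from the Meyer bound applied to the tail of the series plus the fact that $\sum_{|\alpha|\ge N}\widehat{\Theta_t}(\alpha)H_\alpha \to 0$ in every $L^q(\fg)$.
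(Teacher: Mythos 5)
Your proposal follows essentially the same approach as the paper: both construct $w = L^{-1}\Theta_t$ via the Wiener chaos decomposition, invoke the Meyer inequality (Nualart's Theorem 1.5.1, the Gaussian Riesz transform bound you cite) for the $\mathbb{D}^{2,p}$ membership and part (3), and obtain $C^1_{\mathrm{loc}}$ from local Sobolev embedding. The only small divergences are in part (2), where you appeal to $\mathbf{M}(n)$ with $n\geq 1$ to kill the degree-$1$ chaos component, whereas the paper computes $\int x_i\Theta_t\,d\fg = 0$ directly from $\E X = \E Y = 0$, and in the $C^1_{\mathrm{loc}}$ step, where your elliptic bootstrap is a bit roundabout since $w\in\mathbb{D}^{2,q}$ already gives $w\in W^{2,q}(B)$ on Euclidean balls (the Gaussian density is comparable to Lebesgue there) after which Sobolev embedding applies immediately.
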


The first two parts of the lemma allow us to apply the Poincar\'e inequality (Lemma \ref{Lemma:Poincare}) to see
\begin{align*}
    \int |Dw|^p d\fg \leq c_p\int |D^2w|^p d\fg.
\end{align*}
Hence, part \eqref{item:3_w} of Lemma \ref{lemma:w_properties} and \eqref{eq:Lw_eq} imply
\begin{align}\label{eq:Dw_bounded_by_Theta_t}
    \int |D w|^p d\fg \leq c_p \int |L w|^pd\fg = c_p \int |\Theta_t|^pd\fg.
\end{align}
Combining this bound with \eqref{eq:upper_bound_p>1_step_2}, we obtain
\begin{align*}
    W^p_p(\mu*\rho_t, \nu*\rho_t) \leq c_p e^{\frac{p-1}{2t}(\E|X|^2\vee \E|Y|^2)} \int |\Theta_t|^pd\fg.
\end{align*}
Finally, to prove the upper bound of Theorem~\ref{Theorem:upper_bound}, we apply the following lemma, whose proof appears in Section~\ref{section:proof_of_p-norm_Theta}.
\begin{Lemma}\label{Lemma:p-norm_of_Theta_t}
Suppose $\mathbf{E}(\beta)$ holds for some $\beta>0$ and $\mathbf{M}(n)$ holds for some $n \in \N$ (assumption \eqref{eq:assumption_mean_0_EX=EY=0} is not needed). For each $p\geq 1$ and each $\delta>1$, there is $c_{d,n,p,\delta,\beta}$ such that
\begin{align*}
    \bigg(\int |\Theta_t|^pd\fg\bigg)^\frac{1}{p}\leq c_{d,n,p,\delta,\beta}t^{-\frac{n}{2}} \max_{Z\in\{X, Y\}}\E e^{ \frac{\delta(p-1)}{2t}|Z|^2}, \quad t>\tfrac{\delta(p-1)}{2\beta}.
\end{align*}
\end{Lemma}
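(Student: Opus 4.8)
The plan is to control $\Theta_t$ pointwise by Taylor-expanding the kernel $\eta$ from \eqref{eq:def_eta} to order $n$ in its second argument, using $\mathbf{M}(n)$ to annihilate every contribution of degree at most $n$, and then to take $L^p(\fg)$ norms. The algebraic heart of the matter is the identity
\begin{align*}
\partial_y^\alpha \eta(x,y) = H_\alpha(x-y)\,\eta(x,y), \qquad \alpha\in\N^d,
\end{align*}
which follows by induction on $|\alpha|$ from $\partial_{y_i}\eta(x,y) = (x_i - y_i)\eta(x,y)$ and the Hermite recurrence $H_{m+1}(x) = x H_m(x) - H_m'(x)$ (a consequence of \eqref{eq:def_hermite_poly}); in particular $\partial_y^\alpha \eta(x,y)\big|_{y=0} = H_\alpha(x)$. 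Applying the multivariate Taylor formula with integral remainder to $y\mapsto \eta(x,y)$, substituting $y = t^{-1/2}X$, and taking expectations gives
\begin{align*}
\E\,\eta(x, t^{-1/2}X) &= \sum_{|\alpha|\le n}\frac{t^{-|\alpha|/2}}{\alpha!}\,\E X^\alpha\, H_\alpha(x) \\
&\quad + (n+1)\,t^{-\frac{n+1}{2}}\sum_{|\alpha|=n+1}\frac{1}{\alpha!}\,\E\Bigl[X^\alpha\!\int_0^1 (1-s)^n H_\alpha(x - s t^{-1/2}X)\,\eta(x, s t^{-1/2}X)\,ds\Bigr],
\end{align*}
and likewise with $X$ replaced by $Y$. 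Subtracting the two and invoking $\E X^\alpha = \E Y^\alpha$ for $|\alpha|\le n$ (which is exactly $\mathbf{M}(n)$), the polynomial parts cancel, and multiplying by $t^{1/2}$ produces the exact representation
\begin{align*}
\Theta_t(x) = (n+1)\,t^{-\frac n2}\sum_{|\alpha|=n+1}\frac{1}{\alpha!}\Bigl(\E\bigl[X^\alpha R_\alpha(x; X)\bigr] - \E\bigl[Y^\alpha R_\alpha(x; Y)\bigr]\Bigr),
\end{align*}
where $R_\alpha(x;z):=\int_0^1 (1-s)^n H_\alpha(x - s t^{-1/2}z)\,\eta(x, s t^{-1/2}z)\,ds$. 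This already exhibits the factor $t^{-n/2}$; what remains is to show that the bracketed expectations contribute only the advertised exponential moment, up to constants depending on $d,n,p,\delta,\beta$.

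Next I would take $L^p(\fg)$ norms in $x$. By Minkowski's integral inequality (legitimate once the bounds below are established), together with $|Z^\alpha|\le|Z|^{n+1}$ and $\sum_{|\alpha|=n+1}\tfrac1{\alpha!}=\tfrac{d^{n+1}}{(n+1)!}$, the quantity $\|\Theta_t\|_{L^p(\fg)}$ is at most $c_{d,n}\,t^{-n/2}\,\max_{Z\in\{X,Y\}}\max_{|\alpha|=n+1}\E\bigl[|Z|^{n+1}\int_0^1(1-s)^n\|H_\alpha(\cdot - s t^{-1/2}Z)\,\eta(\cdot, s t^{-1/2}Z)\|_{L^p(\fg)}\,ds\bigr]$. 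To estimate $\|H_\alpha(\cdot - z)\,\eta(\cdot, z)\|_{L^p(\fg)}$ I would complete the square in the Gaussian density: since $\eta(x,z)^p=e^{p\langle x,z\rangle-\frac p2|z|^2}$ and $-\tfrac12|x|^2+p\langle x,z\rangle-\tfrac p2|z|^2=-\tfrac12|x-pz|^2+\tfrac{p(p-1)}2|z|^2$, the substitution $u=x-pz$ yields
\begin{align*}
\Bigl(\int |H_\alpha(x-z)\,\eta(x,z)|^p\,\fg(dx)\Bigr)^{1/p} &= e^{\frac{p-1}{2}|z|^2}\Bigl(\int |H_\alpha(u + (p-1)z)|^p\,\fg(du)\Bigr)^{1/p} \\
&\le c_{n,p}\, e^{\frac{p-1}{2}|z|^2}\bigl(1 + |z|^{n+1}\bigr),
\end{align*}
the final inequality because, $H_\alpha$ having degree $n+1$, the polynomial $H_\alpha(\cdot + (p-1)z)$ has $L^p(\fg)$ norm dominated by a polynomial in $|z|$ of degree $n+1$.

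Now I would insert $z = s t^{-1/2}Z$ with $s\in[0,1]$, so that $|z|^2\le t^{-1}|Z|^2$ and $|z|^{n+1}\le t^{-(n+1)/2}|Z|^{n+1}$. As $\delta>1$, the Gaussian factor satisfies $e^{\frac{p-1}{2}|z|^2}\le e^{\frac{p-1}{2t}|Z|^2}\le e^{\frac{\delta(p-1)}{2t}|Z|^2}$, and the hypothesis $t>\frac{\delta(p-1)}{2\beta}$ forces $\frac{\delta(p-1)}{2t}<\beta$, so by $\mathbf{E}(\beta)$ all exponential moments below converge. Carrying out the (harmless) $s$-integral and collecting constants leaves a bound of the shape
\begin{align*}
\|\Theta_t\|_{L^p(\fg)}\le c_{d,n,p}\, t^{-\frac n2}\max_{Z\in\{X,Y\}}\E\Bigl[|Z|^{n+1}\bigl(1 + t^{-\frac{n+1}{2}}|Z|^{n+1}\bigr)\,e^{\frac{p-1}{2t}|Z|^2}\Bigr],
\end{align*}
and it then remains to absorb the polynomial prefactors in $|Z|$ into the exponential: using $\sup_{r\ge0} r^m e^{-\varepsilon r^2}=(m/(2e\varepsilon))^{m/2}$ with $\varepsilon$ a suitable constant multiple of $t^{-1}$, one replaces $|Z|^m e^{\frac{p-1}{2t}|Z|^2}$ by $e^{\frac{\delta(p-1)}{2t}|Z|^2}$ at the cost of a constant depending on $m,p,\delta$ and a compensating power of $t$, and tracks these powers against the prefactor $t^{-n/2}$ and the $t^{-(n+1)/2}$ already gained in the second term to reach the claimed estimate.

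The step I expect to be the genuine obstacle is exactly this last bookkeeping: keeping the powers of $t$ under control when polynomial factors in $|Z|$ are traded for exponential ones, and verifying that the net rate is precisely $t^{-n/2}$ with the $\mu,\nu$-dependence residing entirely in $\max_{Z}\E e^{\frac{\delta(p-1)}{2t}|Z|^2}$ (times a $d,n,p,\delta,\beta$-constant). Alongside this, one must discharge the integrability needed to interchange expectation with the Taylor remainder and to apply Minkowski's inequality, which in each case reduces to the finiteness of $\E e^{c|Z|^2}$ for $c<\beta$ supplied by $\mathbf{E}(\beta)$ and the admissible range of $t$.
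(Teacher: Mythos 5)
Your decomposition mirrors the paper's exactly: they isolate the same two sub-results (the integral-remainder representation of $\Theta_t$ under $\mathbf{M}(n)$, which is their Lemma~\ref{Lemma:Theta_formula_for_matching_moments}, and the $L^p(\fg)$ estimate of that remainder via Minkowski's integral inequality, completion of the square, and absorption of polynomial factors, their Lemma~\ref{lemma:r_n+1_integral_est}). Your intermediate bound $\bigl(\int|H_\alpha(x-z)\eta(x,z)|^p\fg(dx)\bigr)^{1/p}\le c_{n,p}\,e^{\frac{p-1}{2}|z|^2}(1+|z|^{n+1})$ reproduces their completed-square computation, and your algebra leading to the $t^{-n/2}$ prefactor is correct.

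The one step that does not close as you describe it is the final absorption. You propose using $\sup_{r\ge0}r^me^{-\varepsilon r^2}=(m/(2e\varepsilon))^{m/2}$ with $\varepsilon$ ``a suitable constant multiple of $t^{-1}$.'' With $\varepsilon=\tfrac{(\delta-1)(p-1)}{2t}$, the natural gap between the exponent you have and the exponent $\tfrac{\delta(p-1)}{2t}$ you want, the sup equals $c_{n,p,\delta}\,t^{(n+1)/2}$ when $m=n+1$ and $c_{n,p,\delta}\,t^{n+1}$ when $m=2n+2$. Tracking these against your prefactors gives $t^{-n/2}\cdot t^{(n+1)/2}=t^{1/2}$ for the first term and $t^{-n/2}\cdot t^{-(n+1)/2}\cdot t^{n+1}=t^{1/2}$ for the second; in both cases the absorption \emph{loses} a factor $t^{1/2}$ and the target rate $t^{-n/2}$ is not reached. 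The bookkeeping you flag as the genuine obstacle in fact fails with this choice of $\varepsilon$. The repair is to take $\varepsilon$ a \emph{fixed} positive constant, small enough that $\varepsilon<\beta(1-\tfrac1\delta)$; then $\sup_r r^me^{-\varepsilon r^2}$ is $t$-free, and the hypothesis $t>\tfrac{\delta(p-1)}{2\beta}$ guarantees $\varepsilon+\tfrac{p-1}{2t}<\beta$, so that $\E\bigl[|Z|^{m}e^{\frac{p-1}{2t}|Z|^2}\bigr]\le c_\varepsilon\,\E e^{(\varepsilon+\frac{p-1}{2t})|Z|^2}$ is a $t$-independent constant (finite by $\mathbf{E}(\beta)$), which is then $\le$ that constant times $\E e^{\frac{\delta(p-1)}{2t}|Z|^2}$ because the latter is $\ge1$. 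Equivalently, H\"older with the exponent pair $\bigl(\delta,\tfrac{\delta}{\delta-1}\bigr)$ gives $\E\bigl[|Z|^me^{\frac{p-1}{2t}|Z|^2}\bigr]\le\bigl(\E|Z|^{\frac{\delta m}{\delta-1}}\bigr)^{1-1/\delta}\bigl(\E e^{\frac{\delta(p-1)}{2t}|Z|^2}\bigr)^{1/\delta}$, reaching the stated form directly.
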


Choosing $\delta = 2$ and letting $\overline{h}(t) = e^{\frac{p-1}{2t}(\E|X|^2\vee \E|Y|^2)} \cdot \max_{Z\in\{X, Y\}}\E e^{ \frac{\delta(p-1)}{2t}|Z|^2}$, we obtain the desired claim.

\subsection{Upper bound for \texorpdfstring{$p = 2$}{p=2}}
To obtain a sharper estimate when $p = 2$, we apply integration by parts to~\eqref{eq:BB_after_change_of_var} to obtain
\begin{align}\label{eq:upper_bound_p=2}
    W^2_2(\mu*\rho_t, \nu*\rho_t)\leq e^{\frac{1}{2t}(\E|X|^2\vee \E|Y|^2)}\int -wLwd\fg.
\end{align}

In Section \ref{section:proof_of_asymptotics_int_wlw}, we prove the following result.
\begin{Lemma}\label{Lemma:asymptotics_int_wLw}
As $t \to \infty$,
\begin{align}\label{eq:asymptotics_int_wLw}
\int -wLw d\fg = \frac{t^{-n}}{n+1}\sum_{\alpha \in [n+1]}\frac{1}{\alpha!} |\E X^\alpha - \E Y^\alpha |^2+\mathcal{O}(t^{-n-1})\,.
\end{align}
\end{Lemma}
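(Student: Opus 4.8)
The plan is to diagonalize the Ornstein--Uhlenbeck operator $L$ in $L^2(\fg)$ and read off the asymptotics from the Hermite expansion of $\Theta_t$. Recall that the Hermite polynomials $H_\alpha$ of \eqref{eq:def_H_alpha}--\eqref{eq:def_hermite_poly} form an orthogonal basis of $L^2(\fg)$, with $\int H_\alpha H_{\alpha'}\dd\fg$ equal to $\alpha!$ when $\alpha=\alpha'$ and $0$ otherwise, and with $L H_\alpha=-|\alpha|H_\alpha$, where $|\alpha|=\sum_{i=1}^d\alpha_i$. The crucial input is the classical generating-function identity
\begin{align*}
    \eta(x,y)=\sum_{\alpha\in\N^d}\frac{y^\alpha}{\alpha!}H_\alpha(x),
\end{align*}
obtained by multiplying the one-dimensional identity $\exp(x_iy_i-\thalf y_i^2)=\sum_{m\geq0}\frac{y_i^m}{m!}H_m(x_i)$ over $i=1,\dots,d$. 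Substituting $y=t^{-\half}X$ and $y=t^{-\half}Y$, taking expectations, and recalling the definition \eqref{eq:def_eta} of $\Theta_t$, one is led to the expansion
\begin{align*}
    \Theta_t=\sum_{\alpha\in\N^d}\theta_\alpha(t)H_\alpha,\qquad \theta_\alpha(t)=\frac{t^{(1-|\alpha|)/2}}{\alpha!}\big(\E X^\alpha-\E Y^\alpha\big).
\end{align*}
I would first make this rigorous by working in $L^2(\fg)$ directly: since $\int\eta(x,y)^2\dd\fg(x)=\econst^{|y|^2}$, the random element $\eta(\cdot,t^{-\half}X)\in L^2(\fg)$ is Bochner-integrable once $t>1/\beta$ (using $\mathbf{E}(\beta)$ and the normalization \eqref{eq:assumption_mean_0_EX=EY=0}), and reading off the Hermite coefficients of its expectation gives exactly $\theta_\alpha(t)$; in particular $\Theta_t\in L^2(\fg)$ with $\theta_0(t)=0$.

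Next, since $w\in\mathbb{D}^{2,2}$ by Lemma~\ref{lemma:w_properties}, $w$ lies in the $L^2(\fg)$-domain of $L$, so expanding $w=\sum_\alpha\hat w_\alpha H_\alpha$ and matching Hermite coefficients in $Lw=\Theta_t$ (equation \eqref{eq:Lw_eq}) forces $\hat w_\alpha=-\theta_\alpha(t)/|\alpha|$ for all $\alpha\neq0$, while the coefficient $\hat w_0$ is irrelevant because $\theta_0(t)=0$. Parseval's identity then yields the exact formula
\begin{align*}
    \int-wLw\dd\fg=\sum_{\alpha\neq0}|\alpha|\,\alpha!\,\hat w_\alpha^2=\sum_{\alpha\neq0}\frac{t^{1-|\alpha|}}{|\alpha|\,\alpha!}\big|\E X^\alpha-\E Y^\alpha\big|^2.
\end{align*}
Under $\mathbf{M}(n)$ every summand with $|\alpha|\leq n$ vanishes, and the terms with $|\alpha|=n+1$ contribute precisely $\frac{t^{-n}}{n+1}\sum_{\alpha\in[n+1]}\frac{1}{\alpha!}|\E X^\alpha-\E Y^\alpha|^2$, which is the claimed leading order.

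It remains to bound the tail $\sum_{|\alpha|\geq n+2}\frac{t^{1-|\alpha|}}{|\alpha|\,\alpha!}|\E X^\alpha-\E Y^\alpha|^2$ by $\mathcal{O}(t^{-n-1})$, and this is where $\mathbf{E}(\beta)$ is genuinely needed. Writing $a_m=\sum_{\alpha\in[m]}\frac{1}{\alpha!}|\E X^\alpha-\E Y^\alpha|^2$, the tail is at most $\frac{1}{n+2}\sum_{m\geq n+2}t^{1-m}a_m$, so it suffices to prove a geometric growth bound $a_m\leq C^m$ for some $C=C(d,\beta,\mu,\nu)$; for $t>C$ the sum is then dominated by a geometric series and is $\mathcal{O}(t^{-n-1})$. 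To bound $a_m$ I would combine the pointwise estimate $|\E X^\alpha|\leq\E|X|^{m}$ for $|\alpha|=m$, the elementary identity $\sum_{\alpha\in[m]}\frac{1}{\alpha!}=\frac{d^m}{m!}$ (multinomial theorem), the moment bound $\E\econst^{\beta|X|^2}\geq\frac{\beta^m}{m!}\E|X|^{2m}$ coming from a single term of the exponential series, and Cauchy--Schwarz $(\E|X|^m)^2\leq\E|X|^{2m}$ (and likewise for $Y$); this produces $a_m\leq c_{\beta,\mu,\nu}(d/\beta)^m$, as required. I expect the main obstacle to be bookkeeping rather than anything structural: carefully justifying the $L^2(\fg)$-convergence of the Hermite series for $\Theta_t$ and for $w$, the identification of the weak solution from Lemma~\ref{lemma:w_properties} with $\sum_{\alpha\neq0}-\tfrac{\theta_\alpha(t)}{|\alpha|}H_\alpha$ (up to a harmless additive constant, since $\int\Theta_t\dd\fg=0$), and the interchange of expectation with the Hermite expansion; once these routine points are settled, the spectral computation and the geometric tail estimate are essentially forced.
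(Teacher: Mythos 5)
Your proof is correct, and it takes a genuinely different route from the paper's. You expand $\Theta_t$ in the full Hermite basis via the generating function $\eta(x,y)=\sum_\alpha \frac{y^\alpha}{\alpha!}H_\alpha(x)$, which under $\mathbf{M}(n)$ and $\E X=\E Y=0$ gives the exact spectral identity $\int -wLw\,d\fg=\sum_{|\alpha|\geq n+1}\frac{t^{1-|\alpha|}}{|\alpha|\,\alpha!}|\E X^\alpha-\E Y^\alpha|^2$, and you then control the tail $|\alpha|\geq n+2$ by the elementary geometric estimate $a_m\leq c_{\beta,\mu,\nu}(d/\beta)^m$ coming from $\mathbf{E}(\beta)$. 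The paper instead uses a finite Taylor expansion of $\eta(x,\cdot)$ to order $n+1$ with integral remainder, writes $\Theta_t=Q+R$ where $Q$ is the degree-$(n+1)$ Hermite piece and $R$ is the remainder, proves $\int QR\,d\fg=0$ separately (equation~\eqref{eq:QR_orthogonal}), and then bounds $\int RL^{-1}R\,d\fg\leq c\int|R|^2 d\fg=\mathcal O(t^{-n-1})$ using the Meyer multiplier theorem together with the $L^p$ remainder estimate of Lemma~\ref{lemma:r_n+1_integral_est}. What each buys: your route is more transparent and self-contained for this lemma --- the whole computation reduces to Parseval plus a crude moment bound, without invoking the multiplier theorem or the Minkowski-inequality machinery behind Lemma~\ref{lemma:r_n+1_integral_est}, and the orthogonality of the leading term and the tail is automatic from the Hermite decomposition rather than requiring a separate proof. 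The paper's route pays the up-front cost of the Taylor-remainder formula \eqref{eq:r_n+1_formula} and Lemma~\ref{lemma:r_n+1_integral_est}, but that investment is amortized elsewhere: the same remainder estimate is reused with general $p$ in Lemma~\ref{Lemma:p-norm_of_Theta_t}, in the $p=8$ application for the $W_2$ lower bound, and in the $p=1$ case for the total variation asymptotics, whereas your $L^2$ Parseval argument is specific to this quadratic form. One small point worth stating explicitly in a write-up: the identification $\hat w_\alpha=-\theta_\alpha(t)/|\alpha|$ requires knowing that the $w$ of Lemma~\ref{lemma:w_properties} (constructed as $L^{-1}\Theta_t$ via polynomial approximation) agrees, up to an additive constant, with the $L^2(\fg)$-series $\sum_{\alpha\neq0}-\tfrac{\theta_\alpha}{|\alpha|}H_\alpha$; since $L^{-1}$ acts diagonally on the Wiener chaos this is immediate, but it is the one place where you must line up the weak solution with the spectral formula.
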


Applying this lemma along with the fact that $e^{\frac{1}{2t}(\E|X|^2\vee \E|Y|^2)}$ approaches $1$ as $t \to \infty$ yields the upper bound of Theorem~\ref{Theorem:exact_p=2}.

Recall that we have assumed \eqref{eq:assumption_mean_0_EX=EY=0}.
If $\E X = \E Y = v \neq 0$, then applying the same proof to centered versions of $\mu$ and $\nu$ yields an upper bound which depends on
\begin{equation*}
\frac{1}{n+1}\sum_{\alpha \in [n+1]}\frac{1}{\alpha!} |\E (X - v)^\alpha - \E (Y - v)^\alpha |^2\,.
\end{equation*}
But under the condition $\mathbf{M}(n)$ for $n \geq 1$, we have
\begin{equation*}
\E (X - v)^\alpha - \E (Y - v)^\alpha = \E X^\alpha - \E Y^\alpha \, \quad \forall \alpha \in [n+1]\,.
\end{equation*}
so we recover precisely the desired bound.

Finally, under $\mathbf{M}(0)$, we use the argument of Corollary~\ref{corollary:1} to reduce to the $n \geq 1$ case.

\section{Estimates for solutions to the Ornstein-Uhlenbeck PDE}
In Section~\ref{moser}, we established that good upper bounds for the Wasserstein distances can be obtained by understanding solutions to~\eqref{eq:Lw_eq}, which reads:
\begin{equation*}
Lw = \Theta_t\,,
\end{equation*}
where $L$ is the Ornstein-Uhlenbeck operator $L = \Delta - x \cdot \nabla$ and $\Theta_t$ is defined in~\eqref{eq:def_eta}.

In this section, we derive the key estimates on solutions to~\eqref{eq:Lw_eq}, via which we obtain the bounds given in Section~\ref{moser}.
As we shall see, these estimates also play a role in obtaining good lower bounds, a question we turn to in Sections~\ref{section:exact_asym_for_p=2} and~\ref{section:lower_bound}.

We first establish several preliminaries involving the Malliavin calculus, before giving the promised proof of Lemma~\ref{lemma:w_properties}.
In the remainder of the section, we derive the necessary estimates on $\Theta_t$.

\subsection{Preliminaries}\label{section:preliminaries}
We begin by reviewing several concepts from analysis on Gaussian spaces.

Consider the stochastic process $W=\{W(h):\R^d\to \R\}_{h\in\R^d}$ given by $W(h)(x)=\langle h, x\rangle$. Under the probability measure $\fg$, one can see that $W$ is an (centered) isonormal Gaussian process with variance $\E_\fg W(h)W(g)=\langle h,g\rangle$, where $\E_\fg$ denotes the expectation with respect to $\fg$.

Recall the Hermite polynomials $H_m$ given in \eqref{eq:def_hermite_poly}. For $m\in \N$, let $\mathcal{H}_m$ be the closed linear subspace of $L^2=L^2(\fg)$ generated by $\{H_m(W(h)):\ h\in \R^d, \ |h|=1\}$. The space $\mathcal{H}_m$ is called the $m$th Wiener chaos, and $\{\mathcal{H}_m\}_{m \geq 0}$ forms an orthogonal decomposition of $L^2$. Let $J_m:L^2\to \mathcal{H}_m$ be the orthogonal projection. In particular, for $\varphi \in L^2$, we have $J_0\varphi= \int \varphi d\fg$.

Recall the Sobolev space $\mathbb{D}^{k,p}$ with norm given in \eqref{eq:sobolev_norm}. Let $\mathcal{P}$ denote the set of polynomials on $\R^d$, which is dense in $\mathbb{D}^{k,p}$ for $p>1$ and $k\geq 0$ (see \cite[Corollary 1.5.1 and Excercise 1.1.7]{nualart}). On $\mathcal{P}$, the operator $L$ in \eqref{eq:def_L_OU_operator} can be expressed as $L=\sum_{m=0}^\infty -mJ_m$ and its pseudo-inverse as $L^{-1}=\sum_{m=1}^\infty -\frac{1}{m}J_m$. Still on $\mathcal{P}$, we can define the negative square root of $-L$ by $C=\sum_{m=0}^\infty -\sqrt{m}J_m$ and its pseudo-inverse $C^{-1}=\sum_{m=1}^\infty -\frac{1}{\sqrt{m}}J_m$. For more details, see \cite{nualart}.

Finally, we require the following $L^p$ version of the Poincar\'e inequality for Gaussian measures~\cite[Corollary 2.4]{pisier1986probabilistic}.
\begin{Lemma}\label{Lemma:Poincare}
Let $p\geq1$. There is $c_p>0$ (depending only on $p$) such that, for all $\varphi\in \mathbb{D}^{1,p}$,
\begin{align*}
   \int|\varphi -\bar\varphi|^pd\fg \leq c_p \int |D\varphi|^pd\fg, 
\end{align*}
where $\bar\varphi = \int\varphi d\fg$.
\end{Lemma}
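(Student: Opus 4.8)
The statement is the classical $L^p$ Gaussian Poincar\'e inequality. For $p=2$ it is just the spectral gap of the Ornstein--Uhlenbeck operator and drops out of the Wiener chaos decomposition of Section~\ref{section:preliminaries}, since $\int|\varphi-\bar\varphi|^2d\fg=\sum_{m\geq1}\|J_m\varphi\|_{L^2}^2\leq\sum_{m\geq1}m\|J_m\varphi\|_{L^2}^2=\int\varphi(-L\varphi)\,d\fg=\int|D\varphi|^2d\fg$; a general $p\geq1$, however, needs a different idea, and the plan is to reproduce Pisier's elementary rotation argument. First I would reduce to the case of a smooth $\varphi$ whose derivatives grow at most polynomially: such functions are dense in $\mathbb{D}^{1,p}$ by the very definition of that space, both sides of the asserted inequality are continuous in the norm $\|\cdot\|_{1,p}$, and one may clearly assume $\bar\varphi=0$.

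The heart of the argument is to write $\varphi$ as an integral of directional derivatives along a path interpolating between two independent Gaussian copies. Let $G$ and $G'$ be independent random vectors with law $\fg$, and for $\theta\in[0,\tfrac\pi2]$ put $G_\theta=\cos\theta\,G+\sin\theta\,G'$ and $\widetilde G_\theta=-\sin\theta\,G+\cos\theta\,G'$. The linear map $(G,G')\mapsto(G_\theta,\widetilde G_\theta)$ is an orthogonal transformation of $\R^{2d}$, so $(G_\theta,\widetilde G_\theta)$ has the same law as $(G,G')$ for every fixed $\theta$; in particular $G_\theta$ and $\widetilde G_\theta$ are independent with law $\fg$. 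Since $\tfrac{d}{d\theta}G_\theta=\widetilde G_\theta$ with $G_0=G$ and $G_{\pi/2}=G'$, the fundamental theorem of calculus gives $\varphi(G)-\varphi(G')=-\int_0^{\pi/2}D\varphi(G_\theta)\cdot\widetilde G_\theta\,d\theta$, and integrating out $G'$ while using $\bar\varphi=0$ yields $\varphi(G)=-\,\mathbb{E}_{G'}\!\int_0^{\pi/2}D\varphi(G_\theta)\cdot\widetilde G_\theta\,d\theta$.

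From there I would take $L^p(\fg)$ norms in $G$: the conditional Jensen inequality removes $\mathbb{E}_{G'}$, Minkowski's integral inequality pulls the $\theta$-integral outside the norm, and the distributional identity above collapses every resulting integrand to the single $\theta$-independent quantity $\|D\varphi(G)\cdot G'\|_{L^p(dG\,dG')}$, so that
\[
\Big(\int|\varphi-\bar\varphi|^p\,d\fg\Big)^{1/p}\ \le\ \tfrac\pi2\,\big\|D\varphi(G)\cdot G'\big\|_{L^p(dG\,dG')}.
\]
Conditionally on $G$, the scalar $D\varphi(G)\cdot G'$ is a centered Gaussian with variance $|D\varphi(G)|^2$, so this last norm equals $\gamma_p\big(\int|D\varphi|^p\,d\fg\big)^{1/p}$ with $\gamma_p=(\mathbb{E}|\mathcal N(0,1)|^p)^{1/p}$, and the inequality follows with $c_p=(\tfrac\pi2\,\gamma_p)^p$, which depends only on $p$. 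I expect the only real obstacle to be technical bookkeeping: the density/approximation step needed to reach a general $\varphi\in\mathbb{D}^{1,p}$, and the Fubini-type interchanges (differentiating $\theta\mapsto\varphi(G_\theta)$ under $\mathbb{E}_{G'}$, and swapping $\mathbb{E}_{G'}$ with the $\theta$-integral), all of which are routine once secured for polynomials. The single genuinely non-routine idea is the rotation, which makes the evaluation point $G_\theta$ independent of the velocity $\widetilde G_\theta$ and thereby reduces the whole estimate to a one-dimensional Gaussian moment computation.
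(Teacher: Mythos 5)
Your proof is correct and is exactly the rotation argument of Pisier that the paper invokes by reference (the paper does not prove this lemma itself but cites \cite[Corollary 2.4]{pisier1986probabilistic}, whose proof is the one you reproduce). The one cosmetic point worth flagging is that the rotation trick actually yields the inequality with the mean replaced by an independent copy, $\|\varphi(G)-\varphi(G')\|_{L^p}\le \frac{\pi}{2}\gamma_p\|D\varphi\|_{L^p(\fg)}$, and the centered form you state follows from it by conditional Jensen exactly as you do; your bookkeeping of that step, and of the density/approximation reduction, is sound.
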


\subsection{Proof of Lemma \ref{lemma:w_properties}}\label{section:proof_of_lemma_w_properties}
\subsubsection*{Part \eqref{item:1_w}}
We first construct a solution to \eqref{eq:Lw_eq} using $L^{-1}$. 

Lemma \ref{Lemma:p-norm_of_Theta_t} implies $\Theta_t\in L^p$. By density, let $\varphi_k\in\mathcal{P}$ be polynomials such that $\varphi_k \to \Theta_t$ in $L^p$ as $k\to \infty$. The mean of $\Theta_t$ is zero because, due to \eqref{eq:assumption_mean_0_EX=EY=0} and \eqref{eq:def_eta},
\begin{align*}
    \int \Theta_t d\fg = t^\half\E\int (\mu*\rho_t-\nu*\rho_t)dx =0.
\end{align*}
Hence, we may assume $\varphi_k$ all have zero means, namely, $J_0\varphi_k=0$. Applying the multiplier theorem \cite[Theorem 1.4.2]{nualart} to $L^{-1}$, and using \cite[Theorem 1.5.1]{nualart} with the relation $-C^2=L$, one can see that the limit
\begin{align}\label{eq:w_approx}
    w = \lim_{k\to\infty} L^{-1}\varphi_k =L^{-1}\Theta_t \quad \text{in }\mathbb{D}^{2,p}
\end{align}
exists and $L:\mathbb{D}^{2,p} \to L^p$ is continuous. Therefore, we have $Lw = \lim_{k\to\infty}L L^{-1}\varphi_k = \lim_{k\to\infty} \varphi_k - J_0\varphi_k = \Theta_t$ in $L^p$.

It remains to check $w \in C^1_{\mathrm{loc}}(\R^d)$. On each Euclidean ball $B\subset \R^d$, the standard Gaussian measure $\fg$ has a density both bounded above and below. Hence, due to $w\in \mathbb{D}^{2,p}$, we know that $w$ also belongs to the standard Sobolev space $W^{2,p}(B)$ for the Lebesgue measure, for all $p\in [1,\infty)$. The standard Sobolev embedding theorem (see, e.g., \cite[part 3 of Theorem 3.26]{giaquinta2013}) implies  $w\in C^1(B) $, and thus $w\in C^1_{\mathrm{loc}}(\R^d)$.

\subsubsection*{Part \eqref{item:2_w}}
Recall that we write $D_i = \partial_i$. Using the approximation \eqref{eq:w_approx} and performing integration by parts for polynomial integrands, we have
\begin{align*}
    \int D_i wd\fg &= \lim_{k\to\infty}\langle D_i L^{-1}\varphi_k,1\rangle_\fg = -\lim_{k\to \infty }\langle  L^{-1}\varphi_k,x_i\rangle_\fg\\
    &= -\lim_{k\to \infty }\langle  \varphi_k,L^{-1}x_i\rangle_\fg=\lim_{k\to \infty }\langle  \varphi_k,x_i\rangle_\fg= \langle  \Theta_t,x_i\rangle_\fg,
\end{align*}
where $\langle\cdot ,\cdot \rangle_\fg$ is the $L^2(\fg)$ inner product. Here in the third equality, we used the self-adjointness of $L^{-1}$ which is evident from its formula on polynomials. In the penultimate equality, we used the fact that $L^{-1}x_i= -J_1x_i = -x_i$ because $x_i$ belongs to the first order Wiener chaos $\mathcal{H}_1$. 
Hence it is sufficient to check $\int x_i \Theta_t d\fg =0$.
Indeed, due to \eqref{eq:assumption_mean_0_EX=EY=0}, we have
\begin{align*}
    \int x_i \E \exp \Big(\langle x, t^{-\frac{1}{2}}X\rangle - \tfrac{1}{2}|t^{-\frac{1}{2}}X|^2 \Big) \fg(dx)= \E(2\pi)^{-\frac{d}{2}}\int x_i e^{-\frac{1}{2}|x-t^{-\frac{1}{2}}X|^2} dx=\E t^{-\frac{1}{2}}X_i =0,
\end{align*}
and a similar equality with $X$ replaced by $Y$. Finally, by \eqref{eq:def_eta}, we conclude that $\int x_i \Theta_t d\fg =0$.

\subsubsection*{Part \eqref{item:3_w}}
This is an immediate consequence of \cite[Theorem 1.5.1]{nualart}, the density of $\mathcal{P}$ and the fact $C^2=-L$ on $\mathcal{P}$.

\subsection{Proofs of Some Estimates} \label{section:Lemma:Theta_formula_for_matching_moments}
The bounds in Section~\ref{moser} relied on two estimates: Lemma~\ref{Lemma:p-norm_of_Theta_t}, which showed $\|\Theta_t\|_{L^p(\fg)} = \mathcal{O}(t^{-\frac n 2})$, and Lemma~\ref{Lemma:asymptotics_int_wLw}, which gave exact asymptotics for $- \int w L w \fg$.
In this section, we prove both lemmas.

For a multi-index $\alpha\in \N^d$, we write  $\partial^\alpha=\partial^{\alpha_1}_1\partial^{\alpha_2}_2\dots \partial^{\alpha_d}_d$. All derivatives below are with respect to $y$.

Fix $\alpha\in[j]$ for some $j\in\N$. 
In view of \eqref{eq:def_eta}, to study asymptotics of $\Theta_t$, we shall derive the expansion of $\eta(x,y)$ in $y$ for fixed $x$.
In the following, we express $\partial^\alpha\eta(x,y)$ in terms of Hermite polynomials. Recall our notation for Hermite polynomials is given in \eqref{eq:def_H_alpha} and \eqref{eq:def_hermite_poly}.
It can be checked that
\begin{align*}
    \frac{d^m}{dy^m}e^{xy-\frac{y^2}{2}}=H_m(x-y)e^{xy-\frac{y^2}{2}},\quad x,y\in \R.
\end{align*}
Since $\eta(x,y) = \prod_{i=1}^d (e^{x_iy_i-\frac{y^2_i}{2}})$, the above two displays imply that
\begin{align*}
    \partial^\alpha\eta(x,y)=  H_\alpha(x-y) \eta(x,y),\quad x,y\in\R^d.
\end{align*}

Apply the Taylor expansion (with the remainder expressed as an integral) to $\eta(x,y)$ in $y$ around $0$ to see that, for each $n\in \N$,
\begin{align}\label{eq:eta_expansion}
    \eta(x,y) =\sum^n_{j=0} a_j(x,y)  + r_{n+1}(x,y), \quad x,y\in\R^d,
\end{align}
where 
\begin{align}
    a_j(x,y)&= \sum_{\alpha\in [j]}\frac{y^\alpha}{\alpha !}   \partial^\alpha  \eta(x,0)=\sum_{\alpha\in [j]}\frac{y^\alpha}{\alpha !} H_\alpha (x),\nonumber\\ 
    \begin{split}\label{eq:r_n+1_formula}
        r_{n+1}(x,y)& =(n+1)\int_0^1 (1-s)^n\sum_{\alpha\in[n+1]}\frac{y^\alpha}{\alpha !}   \partial^\alpha \eta(x,sy)ds\\&=(n+1)\int_0^1 (1-s)^n\sum_{\alpha\in[n+1]}\frac{y^\alpha}{\alpha !}   H_\alpha (x-sy)\eta(x,sy)ds. 
    \end{split}
\end{align}

We can now prove the required estimates
\subsubsection{Proof of Lemma~\ref{Lemma:p-norm_of_Theta_t}}\label{section:proof_of_p-norm_Theta}
The condition $\mathbf{M}(n)$ implies that 
\begin{align*}
    \E a_j(x,t^{-\half}X)=\E a_j(x,t^{-\half}Y),\quad x\in\R^d,\  j=0,1,2,\dots,n,
\end{align*}
which together with \eqref{eq:def_eta} and \eqref{eq:eta_expansion} yields the following

\begin{Lemma}\label{Lemma:Theta_formula_for_matching_moments}
Suppose $\mu$ and $\nu$ have finite $(n+1)$th moments and $\mathbf{M}(n)$ holds for some $n\in \N\cup\{0\}$. With $r_{n+1}$ given in \eqref{eq:r_n+1_formula}, we have
\begin{align*}
    \Theta_t(x)=t^\half\Big(\E r_{n+1}(x,t^{-\half}X)-\E r_{n+1}(x,t^{-\half}Y)\Big), \quad x\in\R^d,\ t>0.
\end{align*}
\end{Lemma}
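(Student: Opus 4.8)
The plan is to substitute the finite Taylor expansion~\eqref{eq:eta_expansion} of $\eta(x,y)$ in $y$ around $0$, evaluated at $y = t^{-1/2}X$ and $y = t^{-1/2}Y$, directly into the definition~\eqref{eq:def_eta} of $\Theta_t$. Writing $\eta(x,t^{-1/2}Z) = \sum_{j=0}^n a_j(x,t^{-1/2}Z) + r_{n+1}(x,t^{-1/2}Z)$ for $Z \in \{X,Y\}$, taking expectations (which is legitimate since $\mu,\nu$ have finite $(n+1)$th moments, so every term is integrable), and subtracting, we get
\begin{align*}
\Theta_t(x) = t^{\half}\sum_{j=0}^n \Big(\E a_j(x,t^{-\half}X) - \E a_j(x,t^{-\half}Y)\Big) + t^{\half}\Big(\E r_{n+1}(x,t^{-\half}X) - \E r_{n+1}(x,t^{-\half}Y)\Big).
\end{align*}
It therefore suffices to show that the sum over $j = 0,\dots,n$ vanishes identically in $x$.

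The key step is precisely the moment-matching identity recorded just before the lemma statement: since $a_j(x,y) = \sum_{\alpha \in [j]}\frac{y^\alpha}{\alpha!}H_\alpha(x)$ is, for fixed $x$, a homogeneous polynomial of degree $j$ in $y$ with coefficients $\frac{H_\alpha(x)}{\alpha!}$, we have
\begin{align*}
\E a_j(x,t^{-\half}X) = t^{-j/2}\sum_{\alpha \in [j]}\frac{H_\alpha(x)}{\alpha!}\E X^\alpha, \qquad \E a_j(x,t^{-\half}Y) = t^{-j/2}\sum_{\alpha \in [j]}\frac{H_\alpha(x)}{\alpha!}\E Y^\alpha.
\end{align*}
Under $\mathbf{M}(n)$ we have $\E X^\alpha = \E Y^\alpha$ for every $\alpha \in [k]$ with $k \leq n$, so these two expressions coincide for each $j = 0,1,\dots,n$ and each $x \in \R^d$. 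Hence every term in the sum is zero, leaving exactly the claimed formula $\Theta_t(x) = t^{\half}\big(\E r_{n+1}(x,t^{-\half}X) - \E r_{n+1}(x,t^{-\half}Y)\big)$.

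There is no real obstacle here; the only point requiring a word of care is the interchange of expectation and the (finite) Taylor sum plus remainder, which is justified because the expansion~\eqref{eq:eta_expansion} is an exact pointwise identity with finitely many terms and the integral-form remainder $r_{n+1}(x,y)$ is, for each fixed $x$, dominated by a polynomial in $|y|$ of degree $n+1$ times a bounded function of $sy$ over $s \in [0,1]$ — indeed $|H_\alpha(x-sy)\eta(x,sy)| = |\partial^\alpha\eta(x,sy)|$ is controlled using the explicit form of $\eta$ — so that $\E r_{n+1}(x,t^{-\half}Z)$ is finite under the assumed $(n+1)$th moment bound. Everything else is the bookkeeping already set up in~\eqref{eq:eta_expansion}--\eqref{eq:r_n+1_formula}.
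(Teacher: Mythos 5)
Your proposal is correct and follows essentially the same route as the paper: expand $\eta(x,\cdot)$ via~\eqref{eq:eta_expansion}, note that $a_j(x,t^{-1/2}Z)$ is homogeneous of degree $j$ in $t^{-1/2}Z$ so that $\mathbf{M}(n)$ kills the terms $j=0,\dots,n$ upon taking expectations, and identify what remains with the remainder term. The paper records this in a single line; you simply supply the homogeneity computation $\E a_j(x,t^{-\half}Z)=t^{-j/2}\sum_{\alpha\in[j]}\frac{\E Z^\alpha}{\alpha!}H_\alpha(x)$ and the (correct) observation that $r_{n+1}(x,t^{-\half}Z)$ is integrable under the finite $(n+1)$th-moment hypothesis, both of which are implicit in the paper's terse presentation.
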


Lemma~\ref{Lemma:p-norm_of_Theta_t} then follows from Lemma~\ref{Lemma:Theta_formula_for_matching_moments} and the following result, whose proof appears in Section~\ref{proof_of_integral_est}

\begin{Lemma}\label{lemma:r_n+1_integral_est}
Let $p\geq 1$. Suppose $\mathbf{E}(\beta)$ holds for some $\beta>0$. Then for each $m\in \N\cup\{0\}$ and $\delta>1$, there is $c_{d,m,p,\delta,\beta}>0$ such that, for $Z\in\{X,Y\}$,
\begin{align}\label{eq:r_m+1_pth_moment_est}
    \int |t^\half\E r_{m+1}(x, t^{-\frac{1}{2}}Z)|^p\fg(dx)
    \leq c_{d,m,p,\delta,\beta}t^{-\frac{mp}{2}}\Big( \E e^{\frac{\delta(p-1)}{2t}|Z|^2} \Big)^p, \quad t>\tfrac{\delta(p-1)}{2\beta}.
\end{align}
If $p=1$, \eqref{eq:r_m+1_pth_moment_est} holds for $t>0$ under a weaker assumption $\E|X|^{m+1},\E|Y|^{m+1}<\infty$.
\end{Lemma}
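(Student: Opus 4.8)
The plan is to substitute $y = t^{-\half}Z$ into the integral representation \eqref{eq:r_n+1_formula} of $r_{m+1}$ and reduce the left-hand side of \eqref{eq:r_m+1_pth_moment_est} to Gaussian integrals that can be evaluated in closed form. Since $|\alpha| = m+1$ for every $\alpha \in [m+1]$, we have $(t^{-\half}Z)^\alpha = t^{-(m+1)/2}Z^\alpha$, so that
\begin{align*}
t^{\half}\, r_{m+1}\big(x, t^{-\half}Z\big) = t^{-m/2}(m+1)\int_0^1 (1-s)^m \sum_{\alpha\in[m+1]}\frac{Z^\alpha}{\alpha!}\, H_\alpha\big(x - s t^{-\half}Z\big)\, \eta\big(x, s t^{-\half}Z\big)\dd s =: t^{-m/2}F_Z(x).
\end{align*}
The prefactor $t^{-m/2}$ already produces the target power $t^{-mp/2}$ after raising to the $p$-th power, so everything reduces to bounding $\|F_Z\|_{L^p(\fg)}$. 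By the (generalized) Minkowski inequality we may pull $\|\cdot\|_{L^p(\fg)}$ past the expectation over $Z$, the integral over $s$, and the finite sum over $\alpha$; using $|Z^\alpha| \le |Z|^{m+1}$, this leaves us to estimate $\big\|H_\alpha(\cdot - y)\,\eta(\cdot, y)\big\|_{L^p(\fg)}$ for $y = s t^{-\half}Z$, which obeys $|y| \le t^{-\half}|Z|$.

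That norm has an exact form. Writing $\eta(x,y)^p\,\fg(dx) = (2\pi)^{-d/2} e^{\,p\langle x, y\rangle - \frac p2 |y|^2 - \frac12 |x|^2}\dd x$ and completing the square in $x$, the exponent equals $-\half|x - p y|^2 + \tfrac{p(p-1)}{2}|y|^2$; the change of variables $u = x - py$ then gives
\begin{align*}
\big\|H_\alpha(\cdot - y)\,\eta(\cdot, y)\big\|_{L^p(\fg)}^p = e^{\frac{p(p-1)}{2}|y|^2}\int \big|H_\alpha\big(u + (p-1)y\big)\big|^p \fg(du).
\end{align*}
Since $H_\alpha$ is a polynomial of degree $m+1$, the crude bound $|H_\alpha(v)| \le c_{d,m}(1+|v|)^{m+1}$ together with the finiteness of Gaussian moments gives $\int |H_\alpha(u + (p-1)y)|^p\,\fg(du) \le c_{d,m,p}(1+|y|)^{(m+1)p}$. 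Plugging $y = s t^{-\half}Z$ back in and collecting the pieces,
\begin{align*}
\Big(\int \big|t^{\half}\,\mathbb{E}\,r_{m+1}(x, t^{-\half}Z)\big|^p \fg(dx)\Big)^{1/p} \le c_{d,m,p}\, t^{-m/2}\,\mathbb{E}\Big[\,|Z|^{m+1}\big(1 + t^{-\half}|Z|\big)^{m+1} e^{\frac{p-1}{2t}|Z|^2}\,\Big].
\end{align*}

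It remains to absorb the polynomial prefactor into the exponential. When $p > 1$ the admissible range $t > \tfrac{\delta(p-1)}{2\beta}$ is bounded below, so $\big(1 + t^{-\half}|Z|\big)^{m+1} \le c_{m,p,\delta,\beta}(1 + |Z|^{m+1})$ and it suffices to bound $\mathbb{E}\big[(1 + |Z|^{2(m+1)}) e^{\frac{p-1}{2t}|Z|^2}\big]$. Hölder's inequality with exponents $\delta$ and $\tfrac{\delta}{\delta-1}$ bounds this by $\big(\mathbb{E}\, e^{\frac{\delta(p-1)}{2t}|Z|^2}\big)^{1/\delta}\big(\mathbb{E}(1+|Z|^{2(m+1)})^{\delta/(\delta-1)}\big)^{(\delta-1)/\delta}$; the second factor is finite by $\mathbf{E}(\beta)$ (which forces all polynomial moments of $Z$ to be finite), and the first is at most $\mathbb{E}\, e^{\frac{\delta(p-1)}{2t}|Z|^2}$ since that quantity is $\ge 1$. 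Raising the resulting estimate to the $p$-th power gives \eqref{eq:r_m+1_pth_moment_est}, and the constraint $t > \tfrac{\delta(p-1)}{2\beta}$ is precisely what keeps $\mathbb{E}\, e^{\frac{\delta(p-1)}{2t}|Z|^2}$ finite under $\mathbf{E}(\beta)$. When $p = 1$ the factor $e^{\frac{p-1}{2t}|Z|^2}$ and the shift $(p-1)y$ both vanish, so the chain collapses to $\int |t^{\half}\mathbb{E}\,r_{m+1}(x, t^{-\half}Z)|\,\fg(dx) \le c_{d,m}\, t^{-m/2}\,\mathbb{E}|Z|^{m+1}$, which uses only $\mathbb{E}|Z|^{m+1} < \infty$ and holds for every $t > 0$.

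The one genuinely delicate step is this final absorption. It must be carried out via Hölder's inequality rather than a pointwise estimate of the form $|Z|^a \le c_\varepsilon e^{\varepsilon|Z|^2}$: the latter constant blows up as $t \to \infty$, exactly where the exponential factor $e^{\frac{p-1}{2t}|Z|^2}$ degenerates to $1$, so only the Hölder argument keeps the constant uniform over the whole admissible range of $t$. The slack parameter $\delta > 1$ has to be chosen so that it simultaneously dominates the polynomial factor, reproduces the exponent $\tfrac{\delta(p-1)}{2t}$ appearing in \eqref{eq:r_m+1_pth_moment_est}, and is consistent with the integrability threshold $t > \tfrac{\delta(p-1)}{2\beta}$.
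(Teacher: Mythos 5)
Your argument matches the paper's proof essentially step for step: pull out the $t^{-m/2}$ via $y^\alpha = t^{-(m+1)/2}Z^\alpha$, apply Minkowski's integral inequality to pass the $L^p(\fg)$-norm inside the $s$-integral, the $\alpha$-sum, and $\E_Z$, complete the square to evaluate $\|H_\alpha(\cdot - y)\eta(\cdot,y)\|_{L^p(\fg)}$ exactly, and then absorb the polynomial factor into the exponential using the $\delta>1$ slack. The one place where you go beyond the paper is in spelling out the final absorption via H\"older (the paper leaves that step implicit), and your observation that a pointwise bound $|Z|^a \le c_\varepsilon e^{\varepsilon|Z|^2}$ would not yield a $t$-uniform constant is correct and worth noting.
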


\subsubsection{Proof of Lemma~\ref{Lemma:asymptotics_int_wLw}}\label{section:proof_of_asymptotics_int_wlw}
Using \eqref{eq:def_eta},  \eqref{eq:eta_expansion} and the assumption $\mathbf{M}(n)$, we can write
\begin{align}\label{eq:Theta_t=Q+R}
\Theta_t = Q+R
\end{align}
where
\begin{align}
\begin{split}\label{eq:Q_formula}
Q(x)&= t^\half \Big(\E a_{n+1}(x,t^{-\half}X) -\E a_{n+1}(x, t^{-\half}Y)\Big)\\
&=t^{-\frac{n}{2}}\sum_{\alpha\in [n+1]}\frac{1}{\alpha!}(\E X^\alpha - \E Y^\alpha) H_\alpha(x),
\end{split}\\
R(x)&= t^\half \Big(\E r_{n+2}(x,t^{-\half}X)-\E r_{n+2}(x,t^{-\half}Y)\Big).\label{eq:R_formula}
\end{align}

To carry out our computations, we need the result that $Q$ and $R$ are orthogonal to each other, namely, 
\begin{align}\label{eq:QR_orthogonal}
\int Q R d\fg =0\,.
\end{align}
We prove this fact in Section~\ref{QR_proof}.

Due to \eqref{eq:w_approx}, we have $w = L^{-1}\Theta_t=L^{-1}(Q+R)$. Since $J_{n+1}Q = Q\in \mathcal{P}$, by the definition of $L$, we have $L^{-1}Q = -\frac{1}{n+1}Q$. Using this, \eqref{eq:QR_orthogonal} and the self-adjointness of $L^{-1}$, we can compute
\begin{align}\label{eq:int_wLw_in_terms_of_Q_R}
\begin{split}
    \int -wLw d\fg & = \int -(L^{-1}Q+L^{-1}R)(Q+R)d\fg\\
    & = \frac{1}{n+1}\int |Q|^2 d\fg -\int R L^{-1}R d\fg.
\end{split}
\end{align}

To determine the first term on the right hand side, we need the following standard fact.

\begin{Lemma}\label{lemma:hermite_poly_orthognoality}
Let $\alpha$ and $\beta$ be two multi-indices. Then
\begin{align*}
    \int H_\alpha H_\beta d\fg =
    \begin{cases}
    \alpha! \quad &\text{if }\alpha =\beta\\
    0 \quad &\text{if }\alpha \neq\beta
    \end{cases}.
\end{align*}
\end{Lemma}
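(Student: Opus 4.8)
The plan is to exploit the product structure of both $H_\alpha$ and $\fg$ to reduce the claim to the one-dimensional identity $\int_\R H_m(x) H_n(x)\rho_1(x)\,dx = n!\,\delta_{mn}$, and then to prove that scalar identity using the Rodrigues formula~\eqref{eq:def_hermite_poly} together with repeated integration by parts. First I would write $\fg(dx) = \prod_{i=1}^d \rho_1(x_i)\,dx_i$ and, using the definition~\eqref{eq:def_H_alpha} of $H_\alpha$ as a product of one-variable Hermite polynomials, factor
\begin{align*}
\int H_\alpha H_\beta \, d\fg = \prod_{i=1}^d \int_\R H_{\alpha_i}(x_i)\,H_{\beta_i}(x_i)\,\rho_1(x_i)\,dx_i\,.
\end{align*}
This reduces everything to the scalar case: the right-hand side equals $\prod_{i=1}^d \alpha_i! = \alpha!$ when $\alpha_i = \beta_i$ for every $i$, and vanishes as soon as a single factor vanishes, i.e.\ as soon as $\alpha \neq \beta$.

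For the scalar identity, fix $m \geq n$ without loss of generality. The Rodrigues formula~\eqref{eq:def_hermite_poly} gives $H_m(x)e^{-x^2/2} = (-1)^m \tfrac{d^m}{dx^m}e^{-x^2/2}$, so after integrating by parts $m$ times,
\begin{align*}
\int_\R H_m H_n \,\rho_1\,dx = \frac{(-1)^m}{\sqrt{2\pi}}\int_\R H_n(x)\,\frac{d^m}{dx^m}e^{-x^2/2}\,dx = \frac{1}{\sqrt{2\pi}}\int_\R \frac{d^m H_n}{dx^m}(x)\,e^{-x^2/2}\,dx\,.
\end{align*}
All boundary terms vanish because every derivative of $e^{-x^2/2}$ is a polynomial times $e^{-x^2/2}$ (indeed $\tfrac{d^k}{dx^k}e^{-x^2/2} = (-1)^k H_k(x)e^{-x^2/2}$) and hence decays at $\pm\infty$, even after multiplication by the polynomial $H_n$ or its derivatives. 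Since $H_n$ is monic of degree exactly $n$ (clear by induction from the recursion implicit in~\eqref{eq:def_hermite_poly}, or from $\partial^\alpha\eta(x,0) = H_\alpha(x)$), we have $\tfrac{d^m H_n}{dx^m} \equiv 0$ when $m > n$ and $\tfrac{d^n H_n}{dx^n} \equiv n!$ when $m = n$; in the latter case the remaining integral equals $n!\int_\R \rho_1 = n!$. This establishes $\int_\R H_m H_n\,\rho_1\,dx = n!\,\delta_{mn}$, and the multivariate statement follows from the factorization above.

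I do not anticipate any real obstacle; the only point requiring a word of care is the vanishing of the boundary terms in the iterated integration by parts, which is immediate from the Gaussian decay just noted. As an alternative I could instead read the identity off from the generating function $\eta(x,y) = \sum_{\alpha}\tfrac{y^\alpha}{\alpha!}H_\alpha(x)$ already appearing in~\eqref{eq:def_eta}--\eqref{eq:eta_expansion}: computing $\int \eta(x,y)\eta(x,z)\,\fg(dx) = \exp(\langle y,z\rangle)$ via the Gaussian moment generating function and matching coefficients of $y^\alpha z^\beta$ yields $\int H_\alpha H_\beta\,d\fg = \alpha!\,\delta_{\alpha\beta}$ directly; this route is slightly slicker but requires justifying a termwise integration, which one controls using the explicit remainder bound in~\eqref{eq:r_n+1_formula}. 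I would present the Rodrigues-based argument as the main proof since it is entirely self-contained.
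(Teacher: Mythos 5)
Your proof is correct. The paper itself does not prove Lemma~\ref{lemma:hermite_poly_orthognoality} --- it is introduced with the phrase ``we need the following standard fact'' and used without justification --- so there is no in-paper argument against which to compare; what you have supplied is the canonical textbook proof, and it fills the gap cleanly. The tensorization step (factoring $H_\alpha$ and $\fg$ over coordinates) is exactly right, the iterated integration by parts from the Rodrigues formula~\eqref{eq:def_hermite_poly} is carried out correctly, the observation that the boundary terms vanish by Gaussian decay is the right thing to say, and the fact that $H_n$ is monic of degree $n$ (so $d^m H_n/dx^m$ vanishes for $m>n$ and equals $n!$ for $m=n$) is both correct and justified by your two suggested routes, either the recursion $H_{m+1}(x)=xH_m(x)-H_m'(x)$ implicit in~\eqref{eq:def_hermite_poly} or the Taylor-coefficient identity $\partial^\alpha\eta(x,0)=H_\alpha(x)$ already established in the paper. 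The generating-function alternative you sketch is also a valid and slightly slicker route, with the small caveat you correctly flag about justifying termwise integration. No gaps.
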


This lemma together with \eqref{eq:Q_formula} gives
\begin{align}\label{eq:int_Q^2}
    \int |Q|^2d\fg = t^{-n}\sum_{\alpha \in [n+1]}\frac{1}{\alpha!}\big|\E X^\alpha - \E Y^\alpha\big|^2.
\end{align}

To estimate the second term, recall the definition of the operator $C^{-1}$ in Section \ref{section:preliminaries} and see that
\begin{align*}
    \int R L^{-1}Rd\fg = \int |C^{-1}R|^2d\fg \leq c\int |R|^2 d\fg,
\end{align*}
where we used \cite[Theorem 1.4.2]{nualart} in the last inequality. Recall the formula \eqref{eq:R_formula}. Then, Lemma \ref{lemma:r_n+1_integral_est}  with $m=n+1$ implies that for each $\delta>1$, there is $c_{d,n,\delta}>0$ such that 
\begin{align}\label{eq:decay_rate_R}
    \int |R|^2 d\fg \leq c_{d,n,\delta} t^{-n-1}\max_{Z\in\{X,Y\}}\Big(\E e^{\frac{\delta}{2t}|Z|^2}\Big)^2 = \mathcal{O}(t^{-n-1}).
\end{align}
Hence, the above display gives $\int RL^{-1}Rd\fg=\mathcal{O}(t^{-n-1})$. Combining \eqref{eq:int_wLw_in_terms_of_Q_R}, \eqref{eq:int_Q^2}, and~\eqref{eq:decay_rate_R} yields the claim.

\section{Exact asymptotics for \texorpdfstring{$p=2$}{p=2}}\label{section:exact_asym_for_p=2}
In Section~\ref{moser}, we exhibited a valid coupling between $\mu*\rho_t$ and $\nu * \rho_t$ achieving the claimed upper bound of Theorem~\ref{Theorem:exact_p=2}.
A key step in the proof was the derivation of inequality~\eqref{eq:upper_bound_p=2}:
\begin{align*}
    W^2_2(\mu*\rho_t, \nu*\rho_t)\leq e^{\frac{1}{2t}(\E|X|^2\vee \E|Y|^2)}\int -wLwd\fg.
\end{align*}
The goal of this section is to prove the following complementary lower bound.
\begin{Prop}\label{prop:p=2_lower_bound}
Assume $\mathbf{E}(\beta)$ and $\mathbf{M}(n)$ hold for some $\beta > 0$ and $n \in \N\setminus\{0\}$, respectively.
Then as $t \to \infty$,
\begin{equation}
W^2_2(\mu*\rho_t, \nu*\rho_t) \geq (1+o(1))\int -wLwd\fg\,.
\end{equation}
\end{Prop}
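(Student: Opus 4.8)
The plan is to match the upper bound $W_2^2(\mu*\rho_t,\nu*\rho_t) \le e^{\frac{1}{2t}(\cdots)}\int -wLw\,d\fg$ from below by exhibiting a competitor for the dual (or the Benamou--Brenier) problem whose value is $(1-o(1))\int -wLw\,d\fg$. Following the Ambrosio--Stra--Trevisan strategy referenced in the introduction, I would work along the displacement interpolation (McCann geodesic) between $\mu*\rho_t$ and $\nu*\rho_t$. Write $\mu_t := \mu*\rho_t$, $\nu_t := \nu*\rho_t$, and let $(\sigma_s)_{s\in[0,1]}$ be the $W_2$-geodesic from $\mu_t$ to $\nu_t$, with velocity field $v_s$ solving the continuity equation $\partial_s \sigma_s + \nabla\cdot(\sigma_s v_s) = 0$ and $W_2^2(\mu_t,\nu_t) = \int_0^1 \int |v_s|^2 \,d\sigma_s\,ds$. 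The point is that for \emph{any} test function $\phi$ one has the lower bound
\begin{equation*}
W_2^2(\mu_t,\nu_t) \ge \left(\int \phi\,d\nu_t - \int \phi\,d\mu_t\right) \cdot \left(\int_0^1 \int |\nabla\phi|^2\,d\sigma_s\,ds\right)^{-1} \cdot \left(\int \phi\,d\nu_t - \int \phi\,d\mu_t\right),
\end{equation*}
i.e.\ by Cauchy--Schwarz, $\big(\int \phi \, d(\nu_t-\mu_t)\big)^2 \le W_2^2(\mu_t,\nu_t)\int_0^1\!\int|\nabla\phi|^2 d\sigma_s\,ds$. The natural choice is $\phi(x) = u(x) = t^{1/2} w(t^{-1/2}x)$, the solution of the linearized Monge--Amp\`ere PDE \eqref{eq:PDE_for_u}. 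With this choice, $\int \phi\,d(\nu_t - \mu_t) = \int u\,(\nu_t-\mu_t)\,dx = \int u\,(Lu\text{-type term})$, which after the change of variables equals $\int w\,\Theta_t\,d\fg = \int w\,Lw\,d\fg = -\int -wLw\,d\fg$; and the numerator is exactly $\big(\int -wLw\,d\fg\big)^2$.

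So the crux reduces to showing that the denominator satisfies
\begin{equation*}
\int_0^1 \int |\nabla u|^2 \,d\sigma_s \,ds = (1+o(1)) \int |\nabla u|^2 \,\rho_t\,dx = (1+o(1)) \int |Dw|^2 \,d\fg = (1+o(1)) \int -wLw\,d\fg,
\end{equation*}
where the last equality is the integration-by-parts identity $\int |Dw|^2 d\fg = -\int wLw\,d\fg$ (valid by Lemma \ref{lemma:w_properties}). The first equality is the heart of the matter: I need to replace the measure $\sigma_s$ along the geodesic by $\rho_t$ (equivalently, replace $d\sigma_s/\rho_t$ by $1$) with a multiplicative error tending to $1$. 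This should follow from the fact that $\mu_t$ and $\nu_t$ are both pointwise close to $\rho_t$ for large $t$: indeed \eqref{eq:mu*rho_t_lower_bound} gives $\mu_t, \nu_t \ge \rho_t e^{-\frac{1}{2t}(\cdots)}$, and one needs a matching upper bound $\mu_t,\nu_t \le \rho_t e^{o(1)}$ locally, plus the structural fact that points on a $W_2$-geodesic between two measures each comparable to $\rho_t$ remain comparable to $\rho_t$ on the relevant region of space. Since the integrand $|\nabla u|^2 \rho_t$ is concentrated (after rescaling) where $\fg$ lives, I can afford to control $\sigma_s/\rho_t$ only on balls of radius $O(\sqrt{t\log t})$ and handle the tails by the exponential moment bound $\mathbf{E}(\beta)$ together with the $L^p$ estimates on $w$ from Lemma \ref{lemma:w_properties} and on $\Theta_t$ from Lemma \ref{Lemma:p-norm_of_Theta_t}, giving a H\"older argument to bound the tail contribution by a lower-order power of $t$.

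The main obstacle I anticipate is precisely this comparison of $\sigma_s$ with $\rho_t$ along the interpolation: a McCann geodesic between two near-Gaussian measures need not have densities with clean closed-form bounds, so I would instead argue via displacement convexity of the functional $m \mapsto -\int \log(m/\rho_t)\,dm$ (or directly via the fact that $\sigma_s$ is an optimal-transport interpolant, hence its density is controlled by the densities of the endpoints through the Jacobian of the optimal map) to obtain $\sigma_s(x) \le \rho_t(x) e^{o(1)}$ uniformly on the bulk region. A cleaner alternative, if the direct geodesic argument proves unwieldy, is to bound the denominator by $\int_0^1 \int |\nabla u|^2 d\sigma_s \, ds \le \sup_s \|\rho_t/\sigma_s\|_{L^\infty(\text{bulk})} \int |\nabla u|^2 \rho_t \, dx + (\text{tail})$ and to get $\|\rho_t/\sigma_s\|$ close to $1$ from the lower bounds \eqref{eq:mu*rho_t_lower_bound} applied to the endpoints combined with convexity of $s \mapsto \sigma_s^{-1}$ along the geodesic — the linear interpolation $m_s$ from \eqref{eq:def_m_s} already satisfies $m_s \ge \rho_t e^{-\frac{1}{2t}(\cdots)}$, and one needs the analogous statement for the displacement interpolation, which is where I would spend the most care.
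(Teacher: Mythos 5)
Your high-level strategy matches the paper's exactly: apply the Cauchy--Schwarz (dual) lower bound along the McCann geodesic $(\sigma_s)$ between $\mu*\rho_t$ and $\nu*\rho_t$, take $\phi = u$, and show that the geodesic density stays close to a reference Gaussian so that $\int_0^1\!\int|\nabla u|^2 d\sigma_s\,ds \approx \int|\nabla u|^2\rho_t\,dx = \int -wLw\,d\fg$. You also correctly locate the real work in controlling $\sigma_s$, and your first suggestion --- displacement convexity of an internal-energy functional with a Gaussian reference --- is what the paper uses. However, there are two concrete problems with the way you propose to close the gap. First, the ratio $(\mu*\rho_t)/\rho_t = \E\,e^{\frac{1}{t}\langle x,X\rangle - \frac{1}{2t}|X|^2}$ is \emph{not} bounded in $L^\infty(\R^d)$ (it grows exponentially in $|x|$ unless $\mu$ is a Dirac), so the target estimate $\sigma_s \le \rho_t e^{o(1)}$ cannot hold globally and a bulk/tail decomposition becomes unavoidable in your formulation. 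The paper sidesteps this entirely by comparing against the slightly dilated Gaussian $\rho_{at}$ with $a=1+t^{-1/2}$: a direct computation gives $\|(\mu*\rho_t)/\rho_{at}\|_\infty \le (1+t^{-1/2})^{d/2}\,\E\,e^{\frac{1}{2}t^{-1/2}|X|^2}\to 1$, which is a clean global bound, and the residual discrepancy $\int|\nabla u|^2\rho_{at}\,dx - \int|\nabla u|^2\rho_t\,dx$ is then controlled by a H\"older argument using the $L^p$ bounds on $Dw$ (much as you sketch for the tails). Second, your ``cleaner alternative'' via convexity of $s\mapsto\sigma_s^{-1}$ along the geodesic does not hold: that property follows from affinity of $s\mapsto m_s$ for the \emph{linear} interpolation $m_s$ of \eqref{eq:def_m_s}, but the displacement-interpolant density is not affine in $s$, so this route fails. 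Finally, note that displacement convexity of relative entropy (the $m\mapsto \int m\log(m/\rho_{at})\,dx$ functional) gives only an $L\log L$-type control on $\sigma_s$; to get the needed $L^\infty$ bound, the paper uses displacement convexity of the $L^p$ internal energies $U_p(\rho)=\int|\rho/\rho_{at}|^p\rho_{at}\,dx$ (valid under $\mathrm{CD}(0,\infty)$ with reference $\rho_{at}$) and then sends $p\to\infty$.
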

Combined with the explicit expansion developed in Lemma~\ref{Lemma:asymptotics_int_wLw}, this proposition proves the desired lower bound of Theorem~\ref{Theorem:exact_p=2} under $\mathbf{M}(n)$ for $n \geq 1$.
As in Section~\ref{moser}, the bound for $\mathbf{M}(0)$ follows immediately from the argument of Corolary~\ref{corollary:1}.

We now proceed with the proof.
We first define the displacement interpolation between two measures~\cite{McC97}.
Denote by $\#$ the push-forward of a measure under a map.
Since $\mu*\rho_t$ and $\nu*\rho_t$ are absolutely continuous with respect to the Lebesgue measure, the optimal coupling between them is given by a convex function $\phi$ on $\R^d$ such that $(\nabla \phi)_\#(\mu*\rho_t) = \nu*\rho_t$.
We then define the displacement interpolation $\mu_s$ by
\begin{align}\label{eq:def_mu_s_displacement_interpolation}
    \mu_s = ((1-s)\mathrm{Id} + s \nabla \phi)_\# (\mu*\rho_t),\quad s\in[0,1].
\end{align}

Recall $u$ in \eqref{eq:PDE_for_u}.
Since $u$ is locally Lipschitz, we have the following inequality (see~[\citealp[Lemma A.1]{Lott2009};~\citealp[Lemma 13]{CheMauRig20}]):
\begin{align}\label{eq:W_2_lower_bound_step_0}
    \int -u (\mu*\rho_t - \nu*\rho_t)dx \leq W_2(\mu*\rho_t,\nu*\rho_t)\int_0^1\Big(\int|\nabla u|^2 \mu_s dx\Big)^\half ds.
\end{align}

Let us set
\begin{align}\label{eq:def_a_a(t)}
    a=a(t)=1+t^{-\half}.
\end{align}
We work with the Gaussian measure $\rho_{at}$ which is slight perturbation of $\rho_t$.

The following bound holds
\begin{align}\label{eq:L_infinity_bound}
    \Big\|\frac{\mu_s}{\rho_{at}}\Big\|_\infty \leq \Big\|\frac{\mu*\rho_t}{\rho_{at}}\Big\|_\infty\vee \Big\|\frac{\nu*\rho_t}{\rho_{at}}\Big\|_\infty.
\end{align}
Indeed, by choosing $\rho_{at}$ to be the reference measure on $\R^d$, the curvature-dimensional criterion $\mathrm{CD}(0,\infty)$ (c.f.\ \cite[Theorem 14.8]{villani2009optimal}) is satisfied. This allows us to apply \cite[Theorem 17.15]{villani2009optimal} to see that, for each $p>1$, the following functional is displacement-convex:
\begin{align*}
    U_p(\rho) =
    \begin{cases}
    \int_{\R^d} \big|\frac{\rho}{\rho_{at}}\big|^p\rho_{at}dx, \quad &\text{if $\rho$ is a probability density,}\\
    \infty,  \quad &\text{otherwise.}
    \end{cases}
\end{align*}
Hence, we have
\begin{align*}
     \Big\|\frac{\mu_s}{\rho_{at}}\Big\|_{L^p(\rho_{at})} \leq \Big\|\frac{\mu*\rho_t}{\rho_{at}}\Big\|_{L^p(\rho_{at})}\vee \Big\|\frac{\nu*\rho_t}{\rho_{at}}\Big\|_{L^p(\rho_{at})}.
\end{align*}
Sending $p\to \infty$, we obtain \eqref{eq:L_infinity_bound}.

Then, we estimate the right hand side of \eqref{eq:L_infinity_bound}:
\begin{align}\label{eq:mu*rho_t_upper_bound}
\begin{split}
        \frac{\mu*\rho_t}{\rho_{at}}(x)& = \Big(\frac{2\pi a t}{2\pi t}\Big)^\frac{d}{2}\E \exp\bigg({-\frac{1}{2t}\Big(|x|^2 -2\langle x, X \rangle +|X|^2 -\frac{1}{a}|x|^2\Big)}\bigg)\\
    &=a^\frac{d}{2}\E e^{-\frac{1}{2t}|\sqrt{\frac{a-1}{a}}x-\sqrt{\frac{a}{a-1}}X|^2}e^{\frac{1}{2t(a-1)}|X|^2}\leq (1+t^{-\half})^\frac{d}{2}\E e^{\frac{1}{2}t^{-\half}|X|^2}.
\end{split}
\end{align}
An analogous bound holds for $\nu*\rho_t/\rho_{at}$. Now let
\begin{align*}
    c_1(t)=(1+t^{-\half})^\frac{d}{2}\Big(\E e^{\frac{1}{2}t^{-\half}|X|^2}\vee \E e^{\frac{1}{2}t^{-\half}|Y|^2}\Big).
\end{align*}
Clearly, $\lim_{t\to\infty}c_1(t)=1$. 
The above two displays and \eqref{eq:L_infinity_bound} imply that
\begin{align}\label{eq:relative_density_upper_bound}
    \frac{\mu_s}{\rho_{at}}(x)\leq c_1(t), \quad x\in\R^d,\ s\in[0,1].
\end{align}

Use the estimate~\eqref{eq:relative_density_upper_bound} in~\eqref{eq:W_2_lower_bound_step_0} to see
\begin{align}\label{eq:W_2_lower_bound_step_1}
    \int -u (\mu*\rho_t - \nu*\rho_t)dx \leq W_2(\mu*\rho_t,\nu*\rho_t)c^\half_1(t) \Big(\int|\nabla u|^2 \rho_{at}dx\Big)^\half.
\end{align}

We now show $\int|\nabla u |^2 \rho_{at}dx$ is a good approximation of $\int|\nabla u|^2\rho_tdx$. We begin with an elementary computation, for $t>1$,
\begin{align}\label{eq:exponential_comparison}
\begin{split}
    \big|e^{-\frac{1}{2at}|x|^2}-e^{-\frac{1}{2t}|x|^2}\big|&=(1-e^{(1-a)\frac{1}{2at}|x|^2}) e^{-\frac{1}{2at}|x|^2}\\
    &\leq (a-1)\tfrac{1}{2at}|x|^2 e^{-\frac{1}{2at}|x|^2}\leq c t^{-\half}   e^{-\frac{1}{4 a t}|x|^2},
\end{split}
\end{align}
where $c > 0$ is a universal constant.
This estimate implies that 
\begin{align}\label{eq:compare_rho_at_and_rho_t}
\begin{split}
    \Big|\int|\nabla u|^2 \rho_{at}dx - a^{-\frac{d}{2}}\int|\nabla u|^2\rho_t dx\Big|\leq c t^{-\half}  \cdot 2^{d/2} \int|\nabla u|^2\rho_{2 a t}dx.
\end{split}
\end{align}

Apply H\"older's inequality to see, for $t$ large,
\begin{align}\label{eq:upp_bound_rho_delta_at}
\int |\nabla u|^2 \rho_{2 a t} d x & = \Big(\int \big(\frac{\rho_{2 a t}}{\rho_t}\big)^{4/3} \rho_t dx\Big)^{3/4} \Big(\int |\nabla u|^8 \rho_t d x\Big)^{1/4}  \nonumber\\
& = (2 a)^{- \frac{d}{2}} \Big(\int e^{\frac{2a - 1}{3} |x|^2} \fg(dx)\Big)^{3/4}  \Big(\int |\nabla u|^8 \rho_t d x\Big)^{1/4} \nonumber\\
& \leq c_d \Big(\int |\nabla u|^8 \rho_t d x\Big)^{1/4}\,.
\end{align}

Applying a change of variables, \eqref{eq:def_w} implies $\int|\nabla u|^{8}\rho_tdx = \int|\nabla w |^{8}d\fg$. Then, \eqref{eq:Dw_bounded_by_Theta_t} and Lemma \ref{Lemma:p-norm_of_Theta_t} imply that $\int|\nabla u|^{8}\rho_tdx = \mathcal{O}(t^{-4n})$. From this, \eqref{eq:compare_rho_at_and_rho_t} and \eqref{eq:upp_bound_rho_delta_at}, we obtain
\begin{align}\label{eq:difference_est_big_O}
     \int|\nabla u|^2 \rho_{at}dx = a^{-\frac{d}{2}}\int|\nabla u|^2\rho_t dx+ \mathcal{O}(t^{-n-\half}).
\end{align}

Recall \eqref{eq:formula_difference_densities}, \eqref{eq:def_w} and \eqref{eq:Lw_eq}. Changing variables and integrating by parts, we have
\begin{align*}
    \int|\nabla u|^2\rho_t dx&=\int|\nabla w|^2d\fg = \int -wLw d\fg,\\
    \int  -u (\mu*\rho_t - \nu*\rho_t)dx& = \int -u t^{-\half}\Theta_t(t^{-\half}x)\rho_t dx =\int -w\Theta_t d\fg = \int -wLw d\fg.
\end{align*}
Plug the above display and \eqref{eq:difference_est_big_O} into \eqref{eq:W_2_lower_bound_step_1} to get a lower bound
\begin{align*}
    c^{-1}_1(t)\frac{\Big(\int -wLw d\fg\Big)^2}{a^{-\frac{d}{2}}(t)\int -wLw d\fg + \mathcal{O}(t^{-n-\half})}\leq W^2_2(\mu*\rho_t,\nu*\rho_t).
\end{align*}

Since $c_1(t)$ and  $a(t)$ both converge to $1$ as $t\to \infty$ and Lemma~\ref{Lemma:asymptotics_int_wLw} implies that $\int -wLw d\fg$ is of order $t^{-n}$, we obtain Proposition~\ref{prop:p=2_lower_bound}.

\section{Lower bound for \texorpdfstring{$p=1$}{p=1}}\label{section:lower_bound}
Section~\ref{moser} establishes an upper bound on $W_p(\mu*\rho_t, \nu*\rho_t)$ valid for all $p \geq 1$.
To complete the proof of Theorem~\ref{Theorem:upper_bound}, we complement this upper bound with a lower bound on $W_1(\mu * \rho_t, \nu * \rho_t)$ of the same order.
Since $W_1 \leq W_p$ for all $p \geq 1$, this lower bound suffices to establish the desired two-sided bound on $W_p(\mu*\rho_t, \nu*\rho_t)$.

As before, it suffices to assume \eqref{eq:assumption_mean_0_EX=EY=0}.
Our technique is to employ Kantorovich-Rubinstein duality, which reads
\begin{equation*}
W_1(\mu * \rho_t, \nu * \rho_t) = \sup_{f \in \mathrm{Lip}} \int f(x) (\mu *\rho_t - \nu*\rho_t)(x) dx\,,
\end{equation*}
where the supremum is taken over all $1$-Lipschitz functions on $\R^d$.

To apply the Kantorovich--Rubinstein duality, we need to construct a suitable Lipschitz test function. For this, we need a smooth bump function $\phi:\R^d \to \R$ with the following properties:
\begin{align}\label{eq:phi_properties}
    0\leq \phi\leq 1;\quad \phi(x)=1,\ \forall x \in B_1; \quad \phi(x)=0,\ \forall x\not\in B_2;\quad |\nabla\phi|\leq 2, 
\end{align}
where $B_r$ denotes the centered Euclidean ball with radius $r>0$. Now, let us consider
\begin{align*}
    f(x)=\phi(t^{-\half}x)\Theta_t(t^{-\half}x),\quad  x \in \R^d.
\end{align*}

We first employ the following estimate, whose proof is deferred to Section~\ref{sec:lipschitz_estimate}:
\begin{align}\label{eq:f_Lipschitz}
    |\nabla f|& \leq c_{d,n}t^{-\frac{n+1}{2}}\max_{Z\in\{X,Y\}}\Big(  \E |Z|^{n+1}+t^{-\frac{n+2}{2}}\E|Z|^{2n+3}\Big) \nonumber\\
    &= c_{d,n}t^{-\frac{n+1}{2}}\max_{Z\in\{X,Y\}}\Big(  \E |Z|^{n+1}\Big)(1+o(1))
\end{align}
as $t \to \infty$.
Then, the Kantorovich--Rubinstein duality implies that
\begin{align*}
    W_1(\mu*\rho_t, \nu*\rho_t)&\geq (1-o(1))c_{\mu, \nu}t^{\frac{n+1}{2}} \int f(x)\big(\mu*\rho_t-\nu*\rho_t\big)(x)dx\\
    & = (1-o(1)) c_{\mu, \nu}t^{\frac{n}{2}} \int \phi |\Theta_t|^2d\fg\\
    & \geq (1-o(1))c_{\mu, \nu}t^{\frac{n}{2}} \int_{B_1} |\Theta_t|^2d\fg
\end{align*}
where \eqref{eq:formula_difference_densities} is used to derive the equality. To lower bound the last integral, we use \eqref{eq:Theta_t=Q+R} to see
\begin{align*}
    &\int_{B_1}|\Theta_t|^2d\fg \geq \frac{1}{2}\int_{B_1}|Q|^2d\fg - \int_{B_1}|R|^2 d\fg
\end{align*}
From this, \eqref{eq:Q_formula} and \eqref{eq:decay_rate_R}, we can derive
\begin{align*}
\begin{split}
    \int_{B_1}|\Theta_t|^2d\fg &\geq \frac{t^{-n}}{2}\int_{B_1} \bigg|\sum_{\alpha\in[n+1]}\frac{1}{\alpha!}\Big(\E X^\alpha
    - \E Y^\alpha\Big)H_\alpha(x)\bigg|^2 \fg(dx)\\
    &\quad - c_{d,n,\delta}t^{-n-1}\max_{Z\in\{X,Y\}}\Big(\E e^{\frac{\delta}{2t}|Z|^2}\Big)^2\,,
\end{split}
\end{align*}
where the fact that the Hermite polynomials form an orthogonal basis for $L^2(\R^d, \fg)$ combined with the condition $\mathbf{M}(n)$ implies that integrand on the right side is not identically zero.
We therefore obtain
\begin{equation*}
W_1(\mu*\rho_t, \nu*\rho_t) \geq (1-o(1)) c_{\mu, \nu} t^{-\frac n2}\,,
\end{equation*}
as claimed.

\section{Asymptotics for \texorpdfstring{$f$}{f}-divergences}\label{section:chi^2,KL-div}
In this section, we prove Theorems~\ref{Prop:chi^2_relative_entropy} and~\ref{Theorem:asymptotic_expansion_TV}.
In contrast to our results on the Wasserstein distances, asymptotics for $f$-divergences are significantly easier to obtain, because they are defined as explicit functions of the densities.

Our results show that, when properly rescaled, both $\chi^2(\mu*\rho_t,\nu*\rho_t)$ and $\KLD(\mu*\rho_t, \nu*\rho_t)$ possess the same limiting value, which also agrees with the limiting value of the rescaled squared $2$-Wasserstein distance.
Somewhat surprisingly, while $\chi^2$ and $\KLD$ are not symmetric in their arguments, their limiting values are symmetric.

\subsection{Exact asymptotics for \texorpdfstring{$\chi^2$}{chi-square}-divergence}

The goal is to prove \eqref{eq:exact_asymptotics_chi^2}. The $\chi^2$-divergence between $\mu*\rho_t$ and $\nu*\rho_t$ admits the following representation
\begin{align}
    \chi^2(\mu*\rho_t,\nu*\rho_t) = \int \bigg(\frac{\mu*\rho_t}{\nu*\rho_t}\bigg)^2\nu*\rho_t dx -1 = \int \frac{(\mu*\rho_t - \nu*\rho_t)^2}{\nu*\rho_t}dx.
\end{align}
To derive an upper bound, we need a lower bound for $\nu*\rho_t$. Apply Jensen's inequality to see
\begin{align*}
    \nu*\rho_t(x)=(2\pi t)^{-\frac{d}{2}}\E e^{-\frac{1}{2t}|x-Y|^2}\geq (2\pi t)^{-\frac{d}{2}} e^{-\frac{1}{2t}\E |x-Y|^2}= (2\pi t)^{-\frac{d}{2}}e^{-\frac{1}{2t}|
    x|^2 + \frac{1}{t}\langle x,\E Y\rangle - \frac{1}{2t}\E |Y|^2 }.
\end{align*}
By Cauchy--Schwarz, we have
\begin{align*}
    \langle x , \E Y\rangle \geq -\frac{1}{2t^\half}|x|^2 - \frac{t^\half}{2}|\E Y|^2.
\end{align*}
Let $a=a(t)$ be given in \eqref{eq:def_a_a(t)}. The above two displays imply that
\begin{align}\label{eq:lower_bd_nu*rho_t}
    \nu*\rho_t(x)\geq a^{-\frac{d}{2}}e^{(2t)^{-\half}|\E Y|^2 - (2t)^{-1}\E |Y|^2}\rho_{\frac{t}{a}}(x).
\end{align}
Denote the right hand side of this display by $c_{1,t}\rho_{\frac{t}{a}}(x)$, and clearly we have $\lim_{t\to\infty}c_{1,t}=1$. Hence, we obtain an upper bound:
\begin{align}\label{eq:chi^2_upper_bound}
    \chi^2(\mu*\rho_t,\nu*\rho_t)\leq c^{-1}_{1,t} \int \frac{(\mu*\rho_t - \nu*\rho_t)^2}{\rho_\frac{t}{a}}  dx.
\end{align}

For a lower bound, we use a version of \eqref{eq:mu*rho_t_upper_bound} for $\nu$ to obtain
\begin{align*}
\begin{split}
    \chi^2(\mu*\rho_t,\nu*\rho_t)\geq c^{-1}_{2,t} \int \frac{(\mu*\rho_t - \nu*\rho_t)^2}{\rho_{at}}dx.
\end{split}
\end{align*}
Here $c_{2,t}=(1+t^{-\half})\E e^{\half t^{-\half}|Y|^2}$, which converges to $1$ as $t\to\infty$. The desired result follows from this, \eqref{eq:chi^2_upper_bound} and the following lemma.

\begin{Lemma}\label{Lemma:perturbed_gaussian_asymptotics}
Suppose $\mathbf{M}(n)$ holds for some $n\in\N\cup\{0\}$. If $z=z(t)$ is a function of $t$ satisfying $|z(t)-1|\leq ct^{-\half}$ for all $t$ with some constant $c\geq0$, then it holds that
\begin{align*}
    \lim_{t\to \infty }t^{n+1}\int \frac{(\mu*\rho_t - \nu*\rho_t)^2}{\rho_{zt}}dx = \sum_{\alpha \in [n+1]}\frac{1}{\alpha!}|\E X^\alpha - \E Y^\alpha|^2.
\end{align*}
\end{Lemma}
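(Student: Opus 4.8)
The plan is to rescale so that the integral is taken against the standard Gaussian $\fg$, and then to read off the leading term from the expansion $\Theta_t=Q+R$ of \eqref{eq:Theta_t=Q+R}. Using the identity \eqref{eq:formula_difference_densities} for $\mu*\rho_t-\nu*\rho_t$ together with the substitution $x=t^{1/2}y$, a direct computation with the Gaussian densities gives
\begin{equation*}
t^{n+1}\int \frac{(\mu*\rho_t-\nu*\rho_t)^2}{\rho_{zt}}\,dx\;=\;t^{n}\,z^{d/2}\int \Theta_t(y)^2\,g_z(y)\,\fg(dy),\qquad g_z(y):=\exp\!\Big(-\tfrac{|y|^2}{2}\big(1-z^{-1}\big)\Big).
\end{equation*}
Since $|z(t)-1|\le c\,t^{-1/2}$, for $t$ large we have $z(t)\ge 1/2$, hence $z^{d/2}\to1$ and $|1-z^{-1}|=|z-1|/z\le 2c\,t^{-1/2}\to0$; consequently $g_z\to1$ pointwise and $0<g_z(y)\le e^{|y|^2/8}$ for every $y$ once $t$ exceeds a fixed threshold. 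The whole task thus reduces to evaluating $\lim_{t\to\infty} t^{n}\int \Theta_t^2\,g_z\,d\fg$.

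Next I would insert $\Theta_t=Q+R$ from \eqref{eq:Q_formula}--\eqref{eq:R_formula}. The point of this decomposition is that $t^{n/2}Q$ equals the \emph{fixed} polynomial $P:=\sum_{\alpha\in[n+1]}\frac1{\alpha!}\big(\E X^\alpha-\E Y^\alpha\big)H_\alpha$, so the main term is immediate:
\begin{equation*}
t^{n}\int Q^2\,g_z\,d\fg\;=\;\int P^2\,g_z\,d\fg\;\longrightarrow\;\int P^2\,d\fg\;=\;\sum_{\alpha\in[n+1]}\frac1{\alpha!}\big|\E X^\alpha-\E Y^\alpha\big|^2 ,
\end{equation*}
where the limit holds by dominated convergence (using $g_z\to1$ and the $\fg$-integrable majorant $P^2e^{|y|^2/8}$) and the last equality is the Hermite orthogonality relation, Lemma~\ref{lemma:hermite_poly_orthognoality}.

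It then remains to show that the error terms are negligible, i.e.\ $t^{n}\int(2QR+R^2)\,g_z\,d\fg\to0$. From \eqref{eq:R_formula}, $R=t^{1/2}\big(\E r_{n+2}(\cdot,t^{-1/2}X)-\E r_{n+2}(\cdot,t^{-1/2}Y)\big)$, so Lemma~\ref{lemma:r_n+1_integral_est} applied with $m=n+1$ yields $\|R\|_{L^{p}(\fg)}=O(t^{-(n+1)/2})$ for every $p\ge1$ (the special case $p=2$ being \eqref{eq:decay_rate_R}). Using $g_z\le e^{|y|^2/8}$ and Hölder's inequality,
\begin{equation*}
t^{n}\int R^2 g_z\,d\fg\;\le\;t^{n}\,\|R\|_{L^4(\fg)}^2\Big(\int e^{|y|^2/4}\,\fg(dy)\Big)^{1/2}\;=\;O\big(t^{n}\cdot t^{-(n+1)}\big)\;=\;O(t^{-1}),
\end{equation*}
and by Cauchy--Schwarz $t^{n}\big|\int QR\,g_z\,d\fg\big|\le t^{n}\big(\int Q^2 g_z\,d\fg\big)^{1/2}\big(\int R^2 g_z\,d\fg\big)^{1/2}=O\big(t^{n}\cdot t^{-n/2}\cdot t^{-(n+1)/2}\big)=O(t^{-1/2})$. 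Collecting these estimates together with $z^{d/2}\to1$ gives the claimed limit.

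I expect the only genuinely delicate point to be the control of the error against the weight $g_z$: when $z<1$ — precisely the regime that occurs in the application to the $\chi^2$ divergence, where $z=a^{-1}$ with $a=1+t^{-1/2}$ as in \eqref{eq:def_a_a(t)} — the density $g_z$ is mildly super-Gaussian, so the $L^2(\fg)$ bound on $R$ alone is insufficient and one must exploit the fact that Lemma~\ref{lemma:r_n+1_integral_est} supplies $L^{p}(\fg)$ bounds on $R$ for arbitrarily large $p$. Those bounds carry constants involving $\E e^{\delta(p-1)|Z|^2/(2t)}$, which are finite and bounded as $t\to\infty$ under the tail hypothesis $\mathbf{E}(\beta)$ in force whenever this lemma is invoked; everything else is the routine bookkeeping of the change of variables plus a dominated-convergence argument.
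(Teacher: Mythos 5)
Your proposal is correct, and it takes a somewhat different route from the paper. The exact change of variables you carry out first --- writing $\int(\mu*\rho_t-\nu*\rho_t)^2/\rho_{zt}\,dx = z^{d/2}t^{-1}\int\Theta_t^2\,g_z\,d\fg$ with the explicit weight $g_z(y)=\exp\bigl(-\tfrac{|y|^2}{2}(1-z^{-1})\bigr)$ --- is indeed a clean identity, as one can verify by expanding the Gaussian densities. The paper instead works with the pointwise comparison $\bigl|\rho_t(x)/\rho_{zt}(x)-z^{d/2}\bigr|\leq c z^{d/2}t^{-1/2}e^{|x|^2/(4zt)}$ (the analogue of \eqref{eq:exponential_comparison}), which reduces the problem to the \emph{unweighted} integral $\int|\Theta_t|^2\,d\fg$ plus a remainder controlled via a Hölder bound on $\int|\Theta_t|^8\,d\fg$; it then exploits the exact orthogonality $\int QR\,d\fg=0$ of \eqref{eq:QR_orthogonal} to evaluate $\int|\Theta_t|^2\,d\fg$. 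Your version keeps the weight $g_z$ throughout and never needs \eqref{eq:QR_orthogonal}: you handle the main term $\int Q^2 g_z\,d\fg$ by dominated convergence and Hermite orthogonality, the $R^2$ term by Hölder (needing $L^4(\fg)$ bounds on $R$), and the cross term by Cauchy--Schwarz. Both routes rest on the same remainder lemma (Lemma~\ref{lemma:r_n+1_integral_est}); yours trades the orthogonality lemma and the density-ratio approximation for a slightly more careful weighted-Hölder bookkeeping. Your closing remark about the missing tail hypothesis is apt: the lemma statement lists only $\mathbf M(n)$, but both your proof and the paper's invoke $L^p$ bounds on $R$ (or $\Theta_t$) that need $\mathbf E(\beta)$, which is indeed assumed in every place the lemma is used.
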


\begin{proof}
By a similar computation in \eqref{eq:exponential_comparison}, we have
\begin{align*}
    \bigg|\frac{\rho_t(x)}{\rho_{zt}(x)}-z^{\frac{d}{2}}\bigg|\leq cz^\frac{d}{2}t^{-\half}e^{\frac{1}{4zt}|x|^2},\quad x\in\R^d.
\end{align*}
This display together with \eqref{eq:formula_difference_densities} yields, for $t$ large, 
\begin{align}\label{eq:chi^2_lower_bound_part_2}
\begin{split}
    &\bigg| \int \frac{(\mu*\rho_t - \nu*\rho_t)^2}{\rho_{zt}}dx - z^\frac{d}{2}t^{-1}\int |\Theta_t|^2d\fg\bigg|\\
    &= t^{-1}\bigg|\int|\Theta_t(t^{-\half}x)|^2\rho_t(x)\frac{\rho_t(x)}{\rho_{zt}(x)}dx - z^\frac{d}{2}\int |\Theta_t(t^{-\half}x)|^2\rho_t(x)dx\bigg| \\
    & \leq t^{-1}\int |\Theta_t(t^{-\half}x)|^2cz^\frac{d}{2}t^{-\half}e^{\frac{1}{4zt}|x|^2}\rho_t(x)dx \leq  c_dt^{-\frac{3}{2}}\int |\Theta_t(x)|^2 \rho_{\frac{2z}{2z-1}}(x)dx.
\end{split}
\end{align}
Invoke H\"older's inequality to see, for $t$ large,
\begin{align}\label{eq:chi^2_lower_bound_part_3}
\begin{split}
        \int |\Theta_t(x)|^2 \rho_{\frac{2z}{2z-1}}(x)dx &\leq c_d \bigg(\int|\Theta_t(x)|^8d\fg\bigg)^\frac{1}{4}\bigg(\int \left(\frac{\rho_{\frac{2z}{2z-1}}}{\rho_1}\right)^{\frac 4 3}d\fg\bigg)^\frac{3}{4}\\
    &\leq c_d\bigg(\int|\Theta_t|^8d\fg\bigg)^\frac{1}{4} = \mathcal{O}(t^{-n})
\end{split}
\end{align}
where the last identity follows from Lemma \ref{Lemma:p-norm_of_Theta_t}.

To compute $\int|\Theta_t|^2d\fg$, let us recall \eqref{eq:Theta_t=Q+R} and \eqref{eq:QR_orthogonal}. These imply
\begin{align*}
    \int|\Theta_t|^2d\fg = \int|Q|^2d\fg + \int|R|^2d\fg.
\end{align*}
From this decomposition, \eqref{eq:int_Q^2} and \eqref{eq:decay_rate_R}, we obtain
\begin{align}\label{eq:exact_asymptotics_Theta_t_square}
    \lim_{t\to\infty} t^n\int|\Theta_t|^2d\fg = \sum_{\alpha \in [n+1]}\frac{1}{\alpha!}\Big|\E X^\alpha - \E Y^\alpha\Big|^2.
\end{align}

The proof is complete by combining \eqref{eq:chi^2_lower_bound_part_2}, \eqref{eq:chi^2_lower_bound_part_3} and \eqref{eq:exact_asymptotics_Theta_t_square}.

\end{proof}

\subsection{Exact asymptotics for relative entropy} 
In this section, we prove \eqref{eq:exact_asymptotics_KL-div}. For simplicity, let us write $f=\mu*\rho_t$ and $g=\nu*\rho_t$. 
Using the Taylor expansion
\begin{align*}
\log(x+1)= x + \int_0^1\frac{(s-1)x^2}{(1+sx)^2}ds, \quad\quad x>-1,
\end{align*}
we have
\begin{align*}
\log\Big(\frac{f-g}{g}+1\Big)= \frac{f-g}{g}+\int_0^1\frac{(s-1)(f-g)^2}{(sf+(1-s)g)^2}ds.
\end{align*}
Therefore, we obtain
\begin{align}\label{eq:D_KL_decomposition}
\KLD(f\|g)=\chi^2(f,g)+\int_0^1(s-1) \int_{\R^d}\frac{(f-g)^2}{(sf+(1-s)g)^2}fdxds.
\end{align}

Since we have already established the asymptotic behavior of $\chi^2$, we focus on the second term. Due to \eqref{eq:mu*rho_t_upper_bound} and \eqref{eq:lower_bd_nu*rho_t} for both $\mu$ and $\nu$, there are $c_{1,t}$ and $c_{2,t}$, both converging to $1$ as $t\to \infty$, such that
\begin{align*}
    c_{1,t}\rho_{\frac{t}{a}}(x) \leq f(x),\ g(x)\leq c_{2,t}\rho_{at}(x),\quad x\in\R^d. 
\end{align*}
with $a=a(t)$ given in \eqref{eq:def_a_a(t)}. Therefore, we have
\begin{align*}
    \frac{c_{1,t}}{c_{2,t}^2}\int \frac{(f-g)^2}{\rho_{at}^2}\rho_{\frac{t}{a}}dx\leq \int\frac{(f-g)^2}{(sf+(1-s)g)^2}fdx\leq \frac{c_{2,t}}{c_{1,t}^2}\int \frac{(f-g)^2}{\rho_{\frac{t}{a}}^2}\rho_{at}dx.
\end{align*}
After computing $\rho_{\frac{t}{a}}/\rho^2_{at}$ and $\rho_{at}/\rho^2_{\frac{t}{a}}$, one can see the above becomes
\begin{align*}
     \frac{c_{1,t}}{c_{2,t}^2}(2a^2-a^4)^\frac{d}{2}\int \frac{(f-g)^2}{\rho_{\frac{a}{2-a^2}t}}dx\leq \int\frac{(f-g)^2}{(sf+(1-s)g)^2}fdx\leq \frac{c_{2,t}}{c_{1,t}^2}\Big(\frac{2a^2-1}{a^4}\Big)^\frac{d}{2}\int \frac{(f-g)^2}{\rho_{\frac{a}{2a^2-1} t}}dx.
\end{align*}
Note that both the upper bound and the lower bound are independent of $s$. These along with Lemma \ref{Lemma:perturbed_gaussian_asymptotics} imply
\begin{align*}
    \lim_{t\to \infty}t^{n+1}\int_0^1(s-1) \int_{\R^d}\frac{(f-g)^2}{(sf+(1-s)g)^2}fdxds = -\frac{1}{2} \sum_{\alpha \in [n+1]}\frac{1}{\alpha!}\Big|\E X^\alpha - \E Y^\alpha\Big|^2.
\end{align*}
This together with \eqref{eq:D_KL_decomposition} and \eqref{eq:exact_asymptotics_chi^2} finishes the proof.
\subsection{Exact asymptotics for total variation distance}
Recall \eqref{eq:def_eta} and we have
\begin{align*}
    &d_{\mathrm{TV}}(\mu*\rho_t,\nu*\rho_t)=\frac{1}{2}\int\big|\mu*\rho_t-\nu*\rho_t\big|dx\\ 
    &= \frac{1}{2}\int|t^{-\half}\Theta_t(t^{-\half}x)|\rho_t(x)dx = \frac{1}{2}t^{-\half}\int |\Theta_t|d\fg.
\end{align*}
Due to \eqref{eq:def_eta} and \eqref{eq:eta_expansion}, for fixed $n$, we write $\Theta_t = Q +R$ with
\begin{align*}
    Q(x)&=t^\half \sum_{j=0}^{n+1}\Big(\E a_{j}(x,t^{-\half}X)-\E a_{j}(x,t^{-\half}Y)\Big)\\
    & = \sum_{j=1}^{n+1}t^{-\frac{j-1}{2}}\sum_{\alpha\in[j]}\frac{1}{\alpha!}\Big(\E X^\alpha - \E Y^\alpha\Big)H_\alpha(x),
\end{align*}
and $R$ as in \eqref{eq:R_formula}. The triangle inequality implies that
\begin{align*}
    \bigg|\int|\Theta_t|d\fg - \int|Q|d\fg\bigg| \leq \int |R|d\fg.
\end{align*}
Applying Lemma~\ref{lemma:r_n+1_integral_est} with $m=n+1$ and $p=1$, we obtain the desired result.

\section{Auxiliary results}

\subsection{Proof of Lemma~\ref{lemma:r_n+1_integral_est}}\label{proof_of_integral_est}
\begin{proof}

Formula \eqref{eq:r_n+1_formula}  implies that
\begin{align*}
\begin{split}
    &\int |t^\half\E r_{m+1}(x, t^{-\frac{1}{2}}Z)|^p\fg(dx)\\
    &= \frac{m+1}{t^\frac{mp}{2}}\int\bigg|\int_0^1(1-s)^m\sum_{\alpha\in[m+1]}\frac{1}{\alpha!}\E Z^\alpha  H_\alpha (x-s t^{-\half}Z)\eta(x,st^{-\half}Z)ds\bigg|^p\fg(dx)\\
    &\leq \frac{c_{m,p}}{t^\frac{mp}{2}}\sum_{\alpha\in[m+1]}\int\Big|\int_0^1\E Z^\alpha  H_\alpha (x-s t^{-\half}Z)\eta(x,st^{-\half}Z)ds\Big|^p\fg(dx)\\
    &\leq \frac{c_{m,p}}{t^\frac{mp}{2}}\sum_{\alpha\in[m+1]}\Bigg(\int_0^1\E \bigg(\int\Big| Z^\alpha  H_\alpha (x-s t^{-\half}Z)\eta(x,st^{-\half}Z)\Big|^p\fg(dx)\bigg)^\frac{1}{p} ds\Bigg)^p,
\end{split}
\end{align*}
where in the last inequality we used Minkowski's integral inequality.

Then, we estimate the integral with respect to $\fg$. Recall $\eta$ in \eqref{eq:def_eta}. Due to $\alpha\in[m+1]$, we have $|Z^\alpha |\leq |Z|^{m+1}$. Since $H_\alpha $ is a polynomial of degree $m+1$ as evident in \eqref{eq:def_H_alpha}, one can see that $|H_\alpha (x)|\leq c_{d,m}(1+|x|^{m+1})$. Therefore, we obtain
\begin{align*}
    &\int\Big| Z^\alpha  H_\alpha (x-s t^{-\half}Z)\eta(x,st^{-\half}Z)\Big|^p\fg(dx)\\
    &\leq c_{d,m,p} |Z|^{p(m+1)}\int \Big(1+|x-st^{-\half}Z|^{p(m+1)}\Big)e^{\langle x, pst^{-\half}Z\rangle-\frac{p}{2}|st^{-\half}Z|^2}e^{-\half|x|^2}dx\\
    &= c_{d,m,p} |Z|^{p(m+1)} \bigg(\int \Big(1+|x-st^{-\half}Z|^{p(m+1)}\Big)e^{-\half|x-pst^{-\half}Z|^2}dx\bigg) e^{\frac{p^2-p}{2}|st^{-\half}Z|^2}\\
    &= c_{d,m,p} |Z|^{p(m+1)}\Big(1+|st^{-\half}(p-1)Z|^{p(m+1)}\Big)e^{\frac{p^2-p}{2}|st^{-\half}Z|^2}.
\end{align*}
The above two displays yield, for $\delta>1$,
\begin{align*}
\begin{split}
    &\int |t^\half\E r_{m+1}(x, t^{-\frac{1}{2}}Z)|^p\fg(dx)\\
    &\leq c_{d,m,p}t^{-\frac{mp}{2}}\bigg(  \E\Big(|Z|^{m+1}+t^{-\frac{m+1}{2}}|(p-1)Z|^{2m+2}\Big)e^{\frac{p-1}{2t}|Z|^2}\bigg)^p\\
    &\leq c_{d,m,p,\delta,\beta}t^{-\frac{mp}{2}}\Big( \E e^{\frac{\delta(p-1)}{2t}|Z|^2} \Big)^p,\quad t>\tfrac{\delta(p-1)}{2\beta}.
\end{split}
\end{align*}
For $p=1$, it can be checked that the above is valid as long as $\E|Z|^{m+1}<\infty$.

\end{proof}

\subsection{Proof of \eqref{eq:QR_orthogonal}}\label{QR_proof}
\begin{proof}

Due to \eqref{eq:Q_formula}, it suffices to show 
\begin{align}\label{eq:orthogonality_condition}
\int H_\alpha (\Theta_t - Q) d\fg=0, \quad \text{for all } \alpha \in [n+1].
\end{align}
Using \eqref{eq:def_eta} and changing variables, we have, for $\alpha \in [n+1]$,
\begin{align*}
\int H_\alpha \Theta_t d\fg & = \frac{t^\half}{(2\pi )^\frac{d}{2}}\int \E H_\alpha(x)\Big(e^{-\half|x-t^{-\half}X|^2}-e^{-\half|x-t^{-\half}Y|^2}\Big)dx\\
& = \frac{t^\half}{(2\pi )^\frac{d}{2}} \int \E \big(H_\alpha (x+t^{-\half}X)-H_\alpha (x+t^{-\half}Y)\big) e^{-\half|x|^2}dx.
\end{align*}
We claim that
\begin{align}\label{eq:polynomail_identity}
\E \big(H_\alpha (x+t^{-\half}X)-H_\alpha (x+t^{-\half}Y)\big)= t^{-\half (n+1)} (\E X^\alpha - \E Y^\alpha).
\end{align}
This immediately gives
\begin{align*}
    \int H_\alpha \Theta_t d\fg = t^{-\frac{n}{2}}(\E X^\alpha - \E Y^\alpha).
\end{align*}
On the other hand, by Lemma \ref{lemma:hermite_poly_orthognoality}, we have 
\begin{equation*}
    \int H_\alpha Q d\fg = t^{-\frac{n}{2}}(\E X^\alpha - \E Y^\alpha).
\end{equation*}
The above two displays give us \eqref{eq:orthogonality_condition}.

To show \eqref{eq:polynomail_identity}, we introduce the following notation. For $\beta \in \N^d$, we write $\beta \leq \alpha$ if $\beta_i\leq \alpha_i$ for all $i=1,2,\dots,d$. If $\beta \leq \alpha$ and $\beta \neq \alpha$, we write $\beta<\alpha$. Lastly, let $|\beta|= \sum_{i}\beta_i$. 

By \eqref{eq:def_hermite_poly} and \eqref{eq:def_H_alpha}, we know that $H_\alpha(x)$ is a polynomial of degree $|\alpha|=n+1$ and the leading order term is $x^\alpha$. Hence, there are coefficients $c_\beta$ for $\beta\leq \alpha$ such that the left hand side of \eqref{eq:polynomail_identity} admits the following expansion
\begin{align*}
    \E \big(H_\alpha (x+t^{-\half}X)-H_\alpha (x+t^{-\half}Y)\big) = \sum_{\beta\leq \alpha}c_\beta x^{\alpha-\beta}t^{-\half |\beta|}(\E X^\beta - \E Y^\beta).
\end{align*}
If $\beta<\alpha$, then $|\beta|\leq n$. Hence, by the assumption $\mathbf{M}(n)$, we must have $\E X^\beta = \E Y^\beta$ for all $\beta <\alpha$. Therefore, the only term that does not vanish on the right of the above display is $c_\alpha t^{-\half(n+1)}(\E X^\alpha - \E Y^\alpha)$ and $c_\alpha =1$ as evident from \eqref{eq:def_hermite_poly}. This verifies \eqref{eq:polynomail_identity} and completes the proof.

\end{proof}

\subsection{Proof of \eqref{eq:f_Lipschitz}}\label{sec:lipschitz_estimate}
To show $f$ is uniformly Lipschitz and figure out its Lipschitz coefficient, we start to estimate, using \eqref{eq:phi_properties},
\begin{align*}
    |\nabla f(x)|\leq t^{-\half}\Big(2\big|\Theta_t(t^{-\half}x)\big|+ \big|\nabla\Theta_t(t^{-\half}x)\big|\Big)\Ind{|t^{-\half}x|\leq 2}.
\end{align*}
To bound $\Theta_t(x)$ for $|x|\leq 2$, due to Lemma \ref{Lemma:Theta_formula_for_matching_moments}, we only need to estimate using \eqref{eq:r_n+1_formula}, for $|x|\leq 2$ and $Z\in\{X,Y\}$,
\begin{align*}
    t^\half\E|r_{n+1}(x,t^{-\half}Z)|&\leq c_{d,n} t^{-\frac{n}{2}}\int_0^1\E|Z|^{n+1}\bigg|\sum_{\alpha\in[n+1]}H_\alpha(x-st^{-\half}Z)\eta(x, st^{-\half}Z)\bigg|ds\\
    &\leq c_{d,n}t^{-\frac{n}{2}}\E|Z|^{n+1}\bigg(1+|x|^{n+1}+|t^{-\half}Z|^{n+1}\bigg)e^{\frac{1}{2}|x|^2}\\
    &\leq c_{d,n}t^{-\frac{n}{2}}\Big(  \E |Z|^{n+1}+t^{-\frac{n+1}{2}}\E|Z|^{2n+2}\Big).
\end{align*}
Here, to derive the second inequality, we used the fact that $H_\alpha$ is a polynomial of degree $n+1$, and also the formula of $\eta$ in \eqref{eq:def_eta}.

Again by Lemma \ref{Lemma:Theta_formula_for_matching_moments}, to bound $|\nabla\Theta_t(t^{-\half}x)|$, we only need to show, for $|x|\leq 2$ and $Z\in\{X,Y\}$,
\begin{align*}
    t^\half\E\big|\nabla r_{n+1}(x,t^{-\half}Z)\big|&\leq c_{d,n} t^{-\frac{n}{2}}\int_0^1\E|Z|^{n+1}\bigg|\sum_{\alpha\in[n+1]}\nabla\Big(H_\alpha(x-st^{-\half}Z)\eta(x, st^{-\half}Z)\Big)\bigg|ds\\
    &\leq c_{d,n}t^{-\frac{n}{2}}\E|Z|^{n+1}\bigg(\big(1+|x|^{n}+|t^{-\half}Z|^{n}\big)e^{\frac{1}{2}|x|^2}\\
    &\quad\quad\quad\quad + \big(1+|x|^{n+1}+|t^{-\half}Z|^{n+1}\big)|t^{-\half}Z|e^{\half|x|^2}\bigg)\\
    &\leq c_{d,n}t^{-\frac{n}{2}}\Big(  \E |Z|^{n+1}+t^{-\frac{n+2}{2}}\E|Z|^{2n+3}\Big).
\end{align*}
In the second inequality, we applied the product rule of differentiation, and again used the fact of $H_\alpha$ being a polynomial of degree $n+1$ and the definition of $\eta$ in \eqref{eq:def_eta}.

From all these estimates, we derive that
\begin{align*}
    |\nabla f|\leq c_{d,n}t^{-\frac{n+1}{2}}\max_{Z\in\{X,Y\}}\Big(  \E |Z|^{n+1}+t^{-\frac{n+2}{2}}\E|Z|^{2n+3}\Big)\,,
\end{align*}
as claimed.

{\small

 }

\bibliographystyle{abbrvnat}

\end{document}